\newcommand{\nicecolor}{Navy}  
\setlist[1]{wide}
\setlist[2]{leftmargin=15mm} 
\setlist[enumerate]{label=\rm{(\arabic*)}}
\setlist[enumerate,2]{label=\rm({\it\roman*}), }
\setlist[itemize]{label=\raisebox{0.25ex}{\tiny$\bullet$}}
\renewcommand{\thefootnote}{} 
\newtheorem{thm}{Theorem}[section]
\newtheorem*{thm*}{Theorem}
\newtheorem{lem}[thm]{Lemma}
\newtheorem{prop}[thm]{Proposition}
\newtheorem{defi}[thm]{Definition}
\newtheorem{rmk}[thm]{Remark}
\newtheorem{ex}[thm]{Example}
\renewcommand{\to}{\longrightarrow}
\newcommand{\rat}{\dashrightarrow}
\newcommand{\Z}{\ensuremath{\mathbb{Z}}}
\newcommand{\Q}{\ensuremath{\mathbb{Q} }}
\newcommand{\F}{\ensuremath{\mathbb{F} }}
\newcommand{\R}{\ensuremath{\mathbb{R}}}
\newcommand{\C}{\ensuremath{\mathbb{C} }}
\newcommand{\p}{\ensuremath{\mathbb{P} }}
\DeclareMathOperator{\PGL}{PGL}
\DeclareMathOperator{\Aut}{Aut} 
\DeclareMathOperator{\Bir}{Bir}
\newcommand{\Pic}{{\mathrm{Pic}}}
\DeclareMathOperator{\GL}{GL} 
\DeclareMathOperator{\Cr}{Cr}
\DeclareMathOperator{\Gal}{Gal}
\DeclareMathOperator{\sym}{Sym}
\tikzset{commutative diagrams/.cd, arrow style=tikz, diagrams={>=latex}}
\begin{document}
\author{Ahmed Abouelsaad}
\pagestyle{myheadings}
\markright{Finite subgroups of maximal order of the Cremona group over the rationals} 
\title{\uppercase{Finite subgroups of maximal order of the Cremona group over the rationals}} 
 
\renewcommand{\thefootnote}{\fnsymbol{footnote}}
\footnote[0]{2020  Mathematics Subject Classification . Primary 14E07; Secondary 14E05, 14H05, 14H50.}
\keywords{ Cremona group, Del Pezzo surfaces.}
 
\maketitle 
\begin{abstract}
Let $\Cr_\Q(2)$ be the Cremona group of rank $2$ over rational numbers. we give a classification of large finite subgroups $G$ of $\Cr_\Q(2)$ and give a new sharp bound smaller (but not multiplicative) than $M(\Q)=120960 = 2^7\cdot3^3\cdot5\cdot7$; the one given in \cite{MR2567402}. In particular, we prove that any finite subgroup $G \subset\Cr_\Q(2)$ has order $\mid G\mid \le  432$ and Lemma \ref{lemm-17} provides a group of order $432$. We use the modern approach of minimal $G-$surfaces, given a (smooth) rational surface $S\subset\p^2$ defined over $\Q$, we study the finite subgroups $G \subset \Aut_{\Q}(S)$ of automorphisms of $S$.  We give the best bound for the order of $G\subset\Aut(S)$ for surfaces with a conic bundle structure invariant by $G$. We also give the best bound for the order of $G\subset \Aut_\Q(S)$ for all rational Del Pezzo surfaces of some given degree. In addition, we give descriptions of the finite subgroups of automorphisms of conic bundles and Del Pezzo surfaces of maximal size.
\end{abstract}
\section{Introduction}
The Cremona group $\Cr_k(n)$ over a field $k$ is the group of birational automorphisms of the projective space $\p^n_k$, or, equivalently, the group of $k-$automorphisms of the field $k(x_1, x_2, . . . , x_n)$ of rational functions in $n$ independent variables, it is also denoted by $\Bir(\p^n_k)$.\\

The group $\Cr_k(1)$ is the group of automorphisms of the projective line, and hence it is isomorphic to the projective linear group $\PGL(2,k)$. The classification of finite subgroups of $\PGL(2)$ is well-known and goes back to F. Klein. In the case $n = 2$ and $k=\C$, the group $\Cr_{\C}(2)$ is the plane Cremona group over the field of complex numbers.  The classification of finite subgroups of $\Cr_{\C}(2)$ has been studied and has a long history, going back to the $19$th century (see the references in \cite{MR2286582} and \cite{MR2641179}).\\

In this paper, we consider the plane Cremona group over the field of rational numbers, denoted by $\Cr_{\Q}(2)$. This group  $\Cr_{\Q}(2)$ has been studied in \cite{MR2567402}. The main theorem in \cite{MR2567402} gives a sharp multiplicative bound for the orders of the finite subgroups $G$ of $\Cr_\Q(2)$, which is $M(\Q)=120960 = 2^7\cdot3^3\cdot5\cdot7$, this means that the order of any finite subgroup of $\Cr_\Q(2)$ divides $2^7\cdot3^3\cdot5\cdot7$.
 
Our goal is to give a classification of finite subgroups $G$ of $\Cr_\Q(2)$ and give a new sharp bound smaller (not multiplicative) than the one given in \cite{MR2567402}. Our main theorem is the following:
\begin{thm}\label{thmm-11}
   If $G$ is a finite subgroup of $\Bir(\p^2_{\Q})$, then $\mid G\mid \le 432$. Moreover, there is a finite subgroup of $\Bir(\p^2_{\Q})$ of order $432$.
\end{thm}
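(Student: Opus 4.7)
The plan is to use the modern equivariant approach. First, by the standard regularization procedure in characteristic zero, any finite subgroup $G\subset\Bir(\p^2_\Q)$ is conjugate in $\Bir(\p^2_\Q)$ to a subgroup of $\Aut_\Q(S)$ for some smooth projective rational $G$-surface $S$ defined over $\Q$. Running the $G$-equivariant Minimal Model Program, we may replace $(S,G)$ by a minimal $G$-surface; the classical dichotomy then yields that either $S$ admits a $G$-equivariant conic bundle structure $S\to\p^1_\Q$ with $\rk\Pic(S)^G=2$, or $S$ is a Del Pezzo surface over $\Q$ with $\rk\Pic(S)^G=1$. Since $|G|$ is a birational conjugation invariant, it suffices to bound $|G|$ in each of these two situations.

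For the $G$-conic bundle case, I would use the exact sequence
\[
1 \longrightarrow G_f \longrightarrow G \longrightarrow G_b \longrightarrow 1,
\]
where $G_b\subset\Aut_\Q(\p^1_\Q)=\PGL(2,\Q)$ permutes the fibers and $G_f$ acts on the generic fiber. The order of $G_b$ is controlled by the classification of finite subgroups of $\PGL(2,\Q)$ together with the fact that $G_b$ must preserve the ($\Q$-rational) set of discriminantal points of the bundle. The order of $G_f$ is controlled by the fact that it embeds into the automorphism group of a smooth conic over $\Q(t)$ and must act compatibly on every singular fiber. Combining these ingredients yields the sharp bound for the conic bundle case announced in the abstract, whose maximal value, realized by Lemma~\ref{lemm-25}, is $432$.

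For Del Pezzo surfaces, the argument proceeds degree by degree in $d=K_S^2\in\{1,\ldots,9\}$. For each $d$ one has a geometric bound on $\Aut(S_{\overline\Q})$ (coming from the embedding into the Weyl group of the root system of type $E_{9-d}$ for small $d$, and from a torus/linear description for $d\ge 7$), and the rationality hypothesis together with $G$-minimality ($\rk\Pic(S)^G=1$) forces the Galois action on $\Pic(S_{\overline\Q})$ to be compatible with the $G$-action, cutting down the possible orders substantially over $\Q$. Carrying this out in each degree yields a best bound that does not exceed $432$ in any case; this is the content of the degree-by-degree results advertised in the abstract.

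Combining the conic bundle and Del Pezzo estimates proves $|G|\le 432$ for every finite $G\subset\Bir(\p^2_\Q)$, and Lemma~\ref{lemm-25} exhibits an explicit finite subgroup of order exactly $432$, giving sharpness. The main obstacle will be the low-degree Del Pezzo cases $d=1,2,3$, where the ambient Weyl groups $W(E_8)$, $W(E_7)$, $W(E_6)$ are very large and one must carefully combine Galois-theoretic constraints with $G$-minimality and arithmetic obstructions (existence of $\Q$-rational $(-1)$-curves, orbits on exceptional configurations, $\Q$-points on distinguished linear systems) to rule out large finite subgroups defined over $\Q$; a secondary difficulty is ensuring that the bounds obtained in each case are actually achieved by rational surfaces, since for Del Pezzo surfaces of low degree one must separately verify $\Q$-rationality to place the resulting group inside $\Bir(\p^2_\Q)$.
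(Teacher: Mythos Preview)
Your overall framework---regularization, $G$-MMP, and the dichotomy into $G$-conic bundles and Del Pezzo surfaces---matches the paper exactly. However, there is a concrete error in how you distribute the bounds across the two cases, and the Del Pezzo part is too vague to count as a proof.

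You assert that the conic bundle case ``yields the sharp bound \ldots whose maximal value, realized by Lemma~\ref{lemm-25}, is $432$.'' This is incorrect on two counts. First, Lemma~\ref{lemm-25} is not about conic bundles at all: it constructs a rational Del Pezzo surface of degree $6$ over $\Q$ with a finite automorphism group of order $432$. Second, in the paper the conic bundle case is bounded by $144$, not $432$ (Lemma~\ref{lemm-32}): both $G_b$ and $G_f$ embed into $\PGL(2,\Q)$ and hence have order dividing $12$, giving $|G|\mid 144$. So the maximum over all cases is actually attained in the Del Pezzo degree-$6$ case, not in the conic bundle case. Your sketch of the conic bundle argument is otherwise on the right track, but you would arrive at $144$, not $432$.

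For the Del Pezzo side, saying ``one has a geometric bound \ldots coming from the embedding into the Weyl group'' and ``Galois-theoretic constraints \ldots cutting down the possible orders'' is not a proof; it is a description of the kind of work that must be done. The paper carries this out degree by degree with quite different techniques in each case: for $d=6$ an explicit analysis of the torus kernel and the Galois action on the hexagon (this is where $432$ appears and where the main work lies); for $d=3,2,1$ arguments via $\PGL_n(\Q)$ bounds and Minkowski-type results rather than Weyl-group considerations; for $d=4$ a direct analysis of the structure $(\F_2)^4\rtimes H_S$. None of these are accounted for in your outline, and in particular the crucial degree-$6$ analysis that produces both the upper bound $432$ and the sharp example is entirely missing.
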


As already used in \cite{MR2567402}, every finite subgroup $G\subset \Cr_{\Q}(2)$ is conjugate, by a birational map $\phi:\p^2_{\Q} \rat S$, to a finite subgroup of biregular automorphisms $\phi G \phi^{-1}\subset \Aut_{\Q}(S)$. Moreover, one can reduce to the case where S is a Del Pezzo surface or where the group preserves a conic bundle. The degree of a Del Pezzo being between $1$ and $9$ and since every finite subgroup of automorphisms of a Del Pezzo surface of degree $7$ is conjugate to a finite subgroup of $\Aut(\p^2)$, we get $9$ cases to consider. We give the best upper bound for the order of a finite group, an example, and the proof that it is maximal in each of the cases. \\

Let $S$ be a smooth projective minimal surface of degree $d\le 9$ defined over $\Q$. The following table gives the structure of a subgroup of maximal order of the automorphism group $\Aut_{\Q}(S)$ of $S$:
\begin{table}[h!]
    \centering
    \begin{tabular}{| c| c| c| c|c|}
    \hline
$G-$surface &structure of $G$& order & example & lemma\\
      \hline
 conic bundles &  $D_6\times D_6$   & $144$ &\ref{ex-05}&\ref{lemm-24}\\ 
      \hline
      $dP_9$ &$(\Z/2\Z)^2\rtimes\sym_3$ &$24$& \ref{ex-07}&\ref{lemm-26}\\  
  \hline
$dP_8$ &$(D_6\times D_6)\rtimes\Z/2\Z$ &$288$& \ref{ex-03}&\ref{lemm-27}\\  
  \hline
$dP_6$& $(\Z/6\Z)^2\rtimes D_6$    &$432$&\ref{lemm-17}&\ref{lemm-19}\\  
\hline
$dP_5$&  $ \sym_5$ &$120$& \ref{ex-09}&\ref{lemm-20}\\ 
\hline
$dP_4$ & $(\Z/2\Z)^4\rtimes \sym_3$ &$96$&\ref{ex-06}&\ref{lemm-25}\\ 
\hline
$dP_3$&$\sym_5$  &$120$&\ref{ex-02}&\ref{lemm-28}\\ 
\hline
$dP_2$ &$(\Z /2\Z )^3\rtimes\sym_3$   &$48$&\ref{ex-08}&\ref{lemm-22}\\
\hline
$dP_1$ & $D_6$   &$12$&\ref{ex-10}&\ref{lemm-23}\\
\hline
    \end{tabular}
    \label{Table:-32}
\end{table}\\
Where $dP_d$ refers to all rational Del Pezzo surfaces of degree $d$. \bigskip

\thanks{I would like to thank my PhD advisor Jérémy Blanc for suggesting the question and for interesting discussions during the preparation of this text. I extend my thanks to the Department of Mathematics and Computer Science at Basel for their support and hospitality.}
\section{Preliminaries}
We use the modern approach to the problem initiated in the works of Yuri Manin and the second author (see a survey of their results in \cite{MR1422227}). Their work gives a clear understanding of the conjugacy problem via the concept of a rational $G-$surface.\\[2mm]
 We consider groups acting on a surface. A $G-$surface, denote by $(G, S)$, is a pair in which $S$ is a (rational) surface and $G \subset \Aut(S)$ is a group acting biregularly on $S$.
  \begin{defi}
  A smooth projective surface $S$ is said to be minimal if any birational morphism from $S$ to a (nonsingular) surface is an isomorphism.
\end{defi}
 
We say that a birational map $\varphi:S\rightarrow S'$ is $G-$equivariant if the $G-$action on $S'$ induced by $\varphi$ is biregular. The birational map $\varphi$ is called a birational map of $G-$surfaces.
  \begin{defi}
      A pair $(G, S)$ is said to be minimal (or equivalently that G acts minimally on $S$) if
any $G-$equivariant birational morphism $\varphi:S\rightarrow S'$ is an isomorphism.
  \end{defi}
A birational map $\varphi:S\rat\p^2$ yields an isomorphism between $\Cr(S)$ and $\Cr(\p^2)$. The Cremona group is then isomorphic to the group of birational maps of any rational surface. We say that two groups $G\subset\Cr(S)$ and $G^{\prime}\subset\Cr(S^{\prime})$ are birationally conjugate if there exists a birational map $\varphi:S\rat S^{\prime}$ such that $\varphi G\varphi^{-1}=G^{\prime}$. The choice of $\varphi$ does not change the conjugacy class of the group and any element of the conjugacy class can be obtained in this manner. The converse is true, if $G$ is finite $($see for example a proof in \cite[Theorem $1.4$]{MR1954063}$)$. This means that the two groups represent the same conjugacy class in the Cremona group and conversely a conjugacy class of a finite subgroup $G$ of $\Cr_\C(2)$ can be realized as a birational isomorphism class of $G-$surfaces. In this way classification of conjugacy classes of subgroups of $\Cr_\C(2)$ becomes equivalent to the birational classification of $G-$surfaces. On another way, any subgroup of $\Bir(S)$ may therefore be viewed as a subgroup of the Cremona group, up to conjugacy. A $G-$equivariant analog of a minimal surface allows one to concentrate on the study of minimal $G-$surfaces, $i.e.$, surfaces which cannot be $G-$equivariantly birationally and regularly mapped to another $G-$surface. Minimal $G-$surfaces turn out to be $G-$isomorphic either to a conic bundle or a Del Pezzo surface of degree $d$ where $1\le  d \le  9$. \\
To give a smaller sharp bound, one requires to study the finite groups $G$ which may occur in a minimal $G-$surfaces defined over $\Q$ that are $G-$isomorphic to
\begin{itemize}
      \item  a conic bundle structure,
        \item a Del Pezzo surface of degree $d \le  9$.
\end{itemize}
We will study the first case in Section $3$ and the second case in Sections $4-11$.
\section{Rational Surfaces with Conic Bundle structure}
 
\begin{defi}\label{def-15}
    Let $S$ be a smooth projective rational surface defined over $ \overline{\Q}$ and $\pi : S\rightarrow \p^1$ be a morphism. We say that the pair $(S,\pi)$ is a conic bundle if a general fibre of $\pi$ is isomorphic to $\p^1$, with a finite number of exceptions: these singular fibres are the union of smooth rational curves $F_1$ and $F_2$ such that $(F_1)^2 = (F_2)^2 =-1$ and $F_1 · F_2 = 1$.
\end{defi}
Let $(S,\pi)$ and $( \tilde{S},\tilde{\pi})$ be two conic bundles. We say that $\varphi:S\dashrightarrow\tilde{S}$ is a birational map of conic bundles if $\varphi$ is a birational map which sends a general fibre of $\pi$ on a general fibre of $\tilde{\pi}$. This corresponds to ask for $\alpha\in\Aut(\p^1)$ such that $\tilde{\pi}\circ\varphi=\alpha\circ\pi$.
 
We say that a conic bundle $(S,\pi)$ is minimal if any birational morphism of conic bundles $(S,\pi)\rightarrow(\tilde{S},\tilde{\pi})$ is an isomorphism.\\
 
Let $(S, \pi)$ be some conic bundle. We denote by $\Aut(S, \pi)$ (respectively $\mathrm{Bir}(S, \pi)$) the group of automorphisms (respectively birational self-maps) of the conic bundle $(S,\pi)$, $i.e.$ automorphisms of $S$ that send a general fibre of $\pi$ on another general fibre. Observe that $\Aut(S, \pi) = \Aut(S) \cap \mathrm{Bir}(S, \pi)$.

 \begin{defi}\label{def-03}
    Let $n \in\Z$. The $n-$th Hirzebruch surface $\F_n$ is defined to be the quotient of $(\mathbb{A}^2\setminus\{0\})^2$ by the action of $(\mathbb{G}_m)^2$ given by
  \begin{eqnarray*}
        (\mathbb{G}_m)^2\times(\mathbb{A}^2\setminus\{0\})^2&\to& (\mathbb{A}^2\setminus\{0\})^2\\
        \left((\mu,\rho),(x_0,x_1,y_0,y_1)\right)&\mapsto& \left(\mu\rho^{-n}x_0,\mu x_1,\rho y_0,\rho y_1\right).
    \end{eqnarray*}
The class of $(x_0,x_1,y_0,y_1)$ will be written $[x_0:x_1;y_0:y_1]$. The projection
\begin{equation*}
    \tau_{n}:\F_n\to{\p^1},~ [x_0:x_1;y_0:y_1]\mapsto[y_0:y_1]
\end{equation*} 
identifies $\F_n$ with $\p_{{\p^1}}(\mathcal{O}_{{\p^1}}\oplus\mathcal{O}_{{\p^1}}(n)$ as a ${\p^1}-$bundle over ${\p^1}$.
\end{defi}
The disjoint sections $s_{n},s_{-n}\subset\F_n$ given by $x_0 = 0$ and $x_1 = 0$ have self-intersection $n$ and $-n$, respectively.
The fibres $f \subset\F_n$ given by $y_0 = 0$ and $y_1 = 0$ are linearly equivalent and of self-intersection $0$. We moreover get $\Pic(\F_n)=\Z f\oplus\Z s_{-n}= \Z f\oplus\Z s_{n}$.\\[3mm] We have the following result:

\begin{prop} Minimal rational surfaces $($\cite[Theorem V.10]{MR1406314}$)$ \\
Let $S$ be a minimal rational surface. Then $S$ is isomorphic to $\p^2$ or to one of the surfaces
$\F_n$, with $n\neq 1$.
\end{prop}

\begin{lem}\label{lemm-12}
    Let $S$ be a rational smooth surface defined over $\bar{\Q}$ and $\pi:S\rightarrow \p^1$ be a conic bundle. Let $G$ be a finite subgroup of $\Aut_{\bar{\Q}}(S,\pi)$, and $\bar{\pi} : G\rightarrow \Aut(\p^1_{\bar{\Q}})= \PGL (2,\bar{\Q })$ be the homomorphism given by the action on fibres. Let $G^{\prime}=\mathrm{Ker}\bar{\pi}$ and $g\in G^{\prime}\setminus\{id\}$. Then the action of the element $g$ on any smooth fiber is not trivial.
\end{lem}
\begin{proof} 
We contract orbits of components in any singular fibre, and assume that the action of $g$ to be minimal. By \cite[Lemma 3.8]{MR2486798}, any singular fibre of $\pi$ is twisted by some element of $\langle g\rangle$. If $g$ twists a singular fibre, then ($g$ is an involution) and the set $\mathrm{Fix}(g)$ of fixed locus of $g$ is a $2:1$ cover of $\p^1$ via $\pi$ (\cite[Lemma $6.1$]{MR2486798}). So no curve contained in a fiber is fixed point-wise by $g$. Otherwise, there is no singular fiber and by \cite[Lemma 3.2.]{MR2486798}, the surface $S$ is isomorphic to the Hirzebruch surface $\F _n$, for some integer $n \ge 0$. If $n=0$, then the action is given by \[[x_0:x_1;y_0:y_1]\mapsto [a\cdot x_0+b\cdot x_1:c\cdot x_0+d\cdot x_1;y_0:y_1],\]
where $\left(\begin{array}{cc} a&b\\c&d \end{array}\right)\in\PGL _2(\bar{\Q })$, since the action is the same on each fibre, then no fibre is fixed point-wise by $g$. If $n\ge1$, the the action is given by \[[x_0:x_1;y_0:y_1]\mapsto [x_0:\lambda \cdot x_1 +P_n(y_0,y_1)\cdot x_0;y_0:y_1],\] where $P_n(y_0,y_1)$ is a homogeneous polynomial of degree $n$, and $\lambda\in\bar{\Q }^*$. If $\lambda=1$, then the action is not of finite order. If $\lambda\neq1$, then the action is never the identity on any fibre. 
\end{proof}

\begin{lem}\label{lemm-24}
 Let $S$ be a rational surface defined over $\Q $ and $\pi:S\rightarrow \p^1$ be a conic bundle. If $G$ is a finite subgroup of $\Aut_{\Q }(S,\pi)$, then $\mid G\mid $ divides $144$. 
\end{lem}
\begin{proof}
    Let $S$ be a rational surface defined over $\Q $ and $\pi:S\rightarrow \p^1$ be a conic bundle. Let $G$ be a finite subgroup of $\Aut_{\Q }(S,\pi)$. We have a natural homomorphism $\bar{\pi} : G\rightarrow \Aut(\p^1_{\Q }) = \PGL (2,\Q )$ that satisfies $\bar{\pi}(g)\pi = \pi g$, for every $g \in G$. Let $G^{\prime} = \mathrm{ker}\bar{\pi} $, hence, we have the following exact sequence
    \begin{equation}
        1\rightarrow G^{\prime}\rightarrow G\rightarrow\bar{\pi}(G)\subseteq\Aut(\p^1_{\Q })\rightarrow1.
    \end{equation}
Hence, $\mid \bar{\pi}(G)\mid \subseteq\Aut(\p^1_{\Q }) = \PGL (2,\Q )$ and by \cite[Theorem 4.2.]{MR2681719}, $\mid \bar{\pi}(G)\mid $ divides $12$. After extending the scalar to $\bar{\Q}$, we apply Lemma \ref{lemm-12}, then any element $g\in G^{\prime}\setminus\{id\}$ acts non-trivially on any smooth fibre, this implies  that $G^{\prime}\subseteq\Aut(\p^1_{\Q})=\PGL (2,\Q)$. By \cite[Theorem 4.2.]{MR2681719}, $\mid G^{\prime}\mid $ divides $12$. Since $\mid G\mid =\mid G^{\prime}\mid \cdot\mid \bar{\pi}(G)\mid $, then $\mid G\mid $  divides $12\cdot12=144$.
\end{proof}

\begin{ex}   \label{ex-05}  
Let $S=\p^1\times\p^1$ and $\pi:S\rightarrow \p^1$ be the projection on a factor, that is a conic bundle. Any subgroup of  $\Aut_{\Q}(S,\pi)\subset \Aut(\p^1\times\p^1)$ is isomorphic to a subgroup $H$ of $\PGL_2(\Q)\times\PGL_2(\Q)$. Let $G\subset\PGL_2(\Q)$  be a finite subgroup, then the group $H:=G\times G$ is a finite subgroup of $\PGL_2(\Q)\times\PGL_2(\Q)$, hence, the subgroup of $\Aut(\p^1\times\p^1)$ isomorphic to $H$ preserves the fibration through $\pi$. Moreover, if  $G\cong D_6$ (a dihedral group of order $12$), then $H$ is of order $144$.  
\end{ex}
\section{\texorpdfstring{Del Pezzo surfaces of degree $6$}{Del Pezzo surfaces of degree 6}}\label{section:6}
 
Let $S$ be a Del Pezzo surface of degree $6$, defined over $\Q $. Let $X$ be the blow-up of $\p^2$ at the three points $[1:0:0],~[0:1:0]$ and $[0:0:1]$, therefore, $X$ is the standard Del Pezzo surface given in $\p^2 \times \p^2$ given by
\[X=\left\{[x_0:x_1:x_2][y_0:y_1:y_2]\in \p^2\times \p^2\mid ~x_0y_0=x_1y_1=x_2y_2\right\}.\]
Over the algebraic closure of $\Q $, we have an isomorphism $S_{\overline{\Q }}\rightarrow X_{\overline{\Q }}$. Let $L/\Q $ be the smallest Galois field extension such that the six $(-1)-$curves are defined over $L$. The surface $S$ has six $(-1)-$curves defined over $L$ but not necessarily over $\Q $. The conjugation of the action gives an action of $\Gal(L/\Q )$ on $S_{L}$. Indeed, if we write the isomorphism $\varphi: S_L\rightarrow X_L$ and let $g\in \Gal(L/\Q )$, then the following diagram 
\begin{equation*}
   \begin{array}{ccc}
     \hspace{5mm} S_L &\stackrel{\varphi }{\longrightarrow}&X_L \\
   g \downarrow& &  \downarrow \Tilde{g} \\
    \hspace{5mm} S_L & \stackrel{\varphi }{\longrightarrow}&X_L
    \end{array}
\end{equation*}
commutes, where $\Tilde{g}=\varphi g\varphi^{-1}=g \tau$ for some $\tau\in \Aut(X_L)$. The Galois group $\Gal(L/\Q )$ acts on the hexagon by symmetries, so we have a group homomorphism
\[\Gal(L/\Q ) \stackrel{\varrho }{\longrightarrow} \sym_3 \times \Z /2 \Z .\]
This map $\varrho$ is injective as if we let $L^{\prime}=L^{\mathrm{Ker}(\varrho)},~\mathrm{Ker}(\varrho)$ acts trivially on the hexagon, so all $(-1)-$curves are fixed by the kernel, hence they are all defined over $L^{\prime}$ which gives $L^{\prime}=L$, and then $\mathrm{Ker}(\varrho)=\Gal(L/L)$. 
 
The group $\Aut(S_{L})$ (resp. $\Aut(X_L)$) acts by symmetries on the hexagon of $S_{L}$ (resp.$X_{L}$), which induces homomorphisms
\[\Aut(S_{L})\stackrel{\rho }{\longrightarrow} \sym_3 \times \Z /2\Z ,~~~~~~~~~\Aut(X_{L})\stackrel{\hat{\rho} }{\longrightarrow} \sym_3 \times \Z /2\Z .\]
 
We have the following splitting exact sequence 
 
\begin{equation}\label{Aut:X_L:exact}
1 \stackrel{~ }{\rightarrow}\left(L^{*}\right)^2\stackrel{~ }{\rightarrow} \Aut(X_L) \xrightarrow[\mathrm{the~hexagon}]{\mathrm{action~on}} \sym_3\times \Z /2\Z \stackrel{~ }{\rightarrow}1,
\end{equation}
 
where $\Aut(X_L)=\left(L^{*} \right)^2\rtimes(\sym_3 \times \Z /2 \Z )=\left(L^{*}\right)^2\rtimes D_6$, where $D_6$ is the dihedral group of order $12$.  We are looking for a finite subgroup $G$ of $\Aut(S_{\Q })$ where
\begin{equation*}
    \Aut(S_{\Q })= \{\psi\in \Aut(S_{L})\mid  ~\psi g=g\psi ~~\forall g\in \Gal(L/\Q )\}.
\end{equation*}
So we obtain by conjugation a finite subgroup of $\Aut(X_L)$ that can be described as follows
\[H\subseteq \varphi \Aut(S_{\Q })\varphi^{-1}\subseteq \Aut(X_{L})=\left(L^{*}\right)^2\rtimes D_6,\]
\begin{equation*}
H\subseteq \varphi \Aut(S_{\Q })\varphi^{-1}=\left\{\alpha\in \Aut(X_{L})\mid ~\alpha \Tilde{g}= \Tilde{g} \alpha\right\},
\end{equation*}
so we have the following exact sequence
\begin{equation}\label{lemm-13}
1 \stackrel{~ }{\rightarrow} H\cap\left(L^{*}\right)^2\stackrel{~ }{\rightarrow} H\xrightarrow[\mathrm{the~hexagon}]{\mathrm{action~on}} \sym_3\times \Z /2\Z .
\end{equation}
We define $\Delta_1$ to be the image of $H$ in $D_6$, as $\Gal(L/\Q )$ acts on the hexagon faithfully and $\Gal(L/\Q )\subseteq D_6$, so every element in $\Delta_1$ should commute with the image $\Tilde{g}$. Since $H \cap (L^*)^2$ is a finite subgroup of  $(L^*)^2$, it contains only roots of unity. We define $n$ to be the largest order of a root of unity contained in $H \cap (L^*)^2$. So we have $\Q [\zeta_n]$ as an intermediate field of the Galois field extension $L/\Q $, where $\zeta_n$ is the primitive $nth$ root of unity. By the fundamental theorem of Galois theory, $L/\Q [\zeta_n]$ is Galois, hence we have a surjective group homomorphism $\Gal(L/\Q )\rightarrow \Gal(\Q [\zeta_n]/\Q )$, $g\mapsto g\mid _{\Q [\zeta_n]}$, this leads to the existence of the following the exact sequence 
\begin{equation}\label{Gal:exact}
    1\rightarrow \Gal(L/\Q [\zeta_n])\rightarrow \Gal(L/\Q )\rightarrow \Gal(\Q [\zeta_n]/\Q )\cong \Z^*_{n}\rightarrow 1.\end{equation}

\begin{lem}\label{lemm-17}
Let $S$ be a Del Pezzo surface of degree $6$ over $\Q $ and assume that we have an isomorphism with $X$ defined over $L=\Q [\zeta_6]$, such that the action of the Galois group is given by $g:\zeta_6\mapsto \zeta_6^5$, where $g^2=id$. One can takes  \begin{equation*}
           \Tilde{g}:[x_0:x_1:x_2][y_0:y_1:y_2]\mapsto [g(y_0):g(y_1):g(y_2)][g(x_0):g(x_1):g(x_2)]
\end{equation*}
where $\Tilde{g}^2=id$, then $\Aut_{\Q }(S)$ contains a finite subgroup of order $432$ and we can describe the generators explicitly in $\Aut(X)$. Moreover, the surface $S$ is rational.
\end{lem} 
\begin{proof}
    We have $\Gal(L/\Q )=\Gal(\Q [\zeta_6]/\Q )=\langle g\rangle $. So the group $\Aut_{\Q}(S)$ contains a finite subgroup $H=\langle  h_1,h_2,h_3,\alpha_1,\ldots,\alpha_6\rangle $ that is generated by
\begin{align*}
    h_1:&[x_0:x_1:x_2][y_0:y_1:y_2]\mapsto [x_1:x_2:x_0][y_1:y_2:y_0], \\
     h_2:&[x_0:x_1:x_2][y_0:y_1:y_2]\mapsto [x_0:x_2:x_1][y_0:y_2:y_1], \\
      h_3:&(X,Y)\mapsto (Y,X), \\
    \alpha^{i}_{j}:&[x_0:x_1:x_2][y_0:y_1:y_2]\mapsto [x_0:\zeta_6^{i}x_1:\zeta_6^{j}x_2][y_0:\zeta_6^{6-i} y_1:\zeta_6^{6-j} y_2],
\end{align*}
where $0\le  i\le  5$ and $1\le  j\le  6$. Then, $H\cong(\Z/6\Z)^2\rtimes D_6$ and thus has $432$ elements. To prove that the surface is rational, the point $P=[1:1:1][1:1:1]$ is fixed by $\Tilde{g}$. Let $X_L$ be the hexagon over $L$ which is the blow-up of the three points $p_1=[1:0:0],~p_2=[0:1:0]$ and $p_3=[0:0:1]$ in $\p^2$. Since we have the point $p_4=[1:1:1]$ in the middle defined over $L$, then we blow up this point. We will get $X^{\prime}_L$ as a Del Pezzo surface of degree $5$, the blow up of $S$ at a $\Q-$rational point is then a Del Pezzo surface of degree $5$ with a $\Q-$rational point and is thus rational \cite[Theorem 3.15]{MR225780}.
\end{proof}

\begin{lem}\label{lemm-19}
   Let $S$ be a Del Pezzo surface of degree $6$ defined over $\Q $.  If $G\subset\Aut_{\Q}(S)$ is finite subgroup, then $\mid G\mid \le 432$.
\end{lem}
\begin{proof} Let $L/\Q $ be the smallest finite Galois extension such that all $(-1)-$curves of $S$ are defined over $L$ and $\Q[\zeta_n]$ be an intermediate field, where $n$ is the largest order of a root of unity contained in $L^*$ and $\zeta_n$ is a primitive $nth$ root of unity. Let $g\in\Gal(L/\Q)$ and $\Tilde{g}$ be the corresponding Galois action. In order to calculate the elements in $H\cap\left(L^{*}\right)^2$, we take $\lambda, \mu \in L^*$ and $\tau\in H\cap\left(L^{*}\right)^2 $, then one can assume that
\begin{equation}\tau:[x_0:x_1:x_2][y_0:y_1:y_2]\mapsto [x_0:\frac{1}{\lambda}\cdot 
 x_1:\frac{1}{\mu}\cdot  x_2][y_0:\lambda\cdot y_1:\mu \cdot y_2],\end{equation}
and $\tau$ commutes with $\Tilde{g},~ i.e. ~\tau=\Tilde{g}^{-1}\tau \Tilde{g}$.
Let $\Gal(L/\Q )$ be isomorphic to a subgroup of $D_6$. We apply the exact sequence \eqref{Gal:exact}, the group ($\Z_n)^*$ is a quotient of a subgroup of $D_6$. The extension $\Q [\zeta_n]/\Q$ is abelian and the group $\Gal(\Q [\zeta_n]/\Q)$ is quotient of $\Gal(L/\Q)$, $\Z_n^*$  is abelian. Therefore, it has order at most $m\le 6$; as $D_6$ is not abelian. Hence, $m=\phi( n )$, which implies that $n$ is at most $18$, where Euler$^{\prime}$s totient function $\phi( n )$.

Let $D_6= \sym_3 \times \Z_2$, where $\sym_3$ permutes the coordinates $x_i$ and $y_i$ in the same way, i.e., it is corresponding to 
\[[x_1:x_2:x_3][y_1:y_2:y_3]\mapsto [ x_{\delta(1)}: x_{\delta(2)} : x_{\delta(3)} ][ y_{\delta(1)}:y_{\delta(2)}:y_{\delta(3)}], \]
for some $\delta\in\sym_3$ and $\Z/2$ is an exchange, i.e., it is corresponding to $(X,Y)\mapsto (Y,X)$. One can study the cases of the Galois group $\Gal(L/\Q)$. 
\begin{itemize}
  \item [a)] If $\Gal(L/\Q)$ contains an element of order $2$ that is a transposition (resp. product of a transposition and an exchange), it corresponds to 
\[\alpha:[x_{0}:x_{1}:x_{2}][y_{0}:y_{1}:y_{2}]\mapsto [\frac{1}{\kappa}g(y_{0}):\frac{1}{\nu}g(y_{2}):g(y_{1})][\kappa\cdot g(x_{0}):\nu \cdot g(x_{2}):g(x_{1})]\]
(respectively, 
 \[\beta:[x_{0}:x_{1}:x_{2}][y_{0}:y_{1}:y_{2}]\mapsto [\frac{1}{\kappa}g(x_{0}):\frac{1}{\nu}g(x_{2}):g(x_{1})][\kappa\cdot g(y_{0}):\nu \cdot g(y_{2}):g(y_{1})]),\]
for some $\kappa,\nu\in L^*$. Hence, $\alpha^{-1}\tau \alpha$ is then given by 
\[[x_{0}:x_{1}:x_{2}][y_{0}:y_{1}:y_{2}]\mapsto [ x_{0}: g^{-1}(\mu)x_{1} : g^{-1}(\lambda)x_{2}][ y_{0}:\frac{1}{g^{-1}(\mu)}y_{1}:\frac{1}{g^{-1}(\lambda)}y_{2}] \]
(respectively, $\beta^{-1}\tau \beta$ is then given by 
\[[x_{0}:x_{1}:x_{2}][y_{0}:y_{1}:y_{2}]\mapsto [ x_{0}: \frac{1}{g^{-1}(\mu)}x_{1} : \frac{1}{g^{-1}(\lambda)}x_{2}][ y_{0}:g^{-1}(\mu)y_{1}:g^{-1}(\lambda)y_{2}]),\]
The equation $\tau=\alpha^{-1}\tau \alpha$ (resp. $\tau=\beta^{-1}\tau \beta$) holds provided that $\mu$ and $\lambda$ satisfy the two conditions $\mu\cdot g^{-1}(\lambda)=1$ and $\lambda\cdot g^{-1}(\mu)=1$ (resp. $\mu- g^{-1}(\lambda)=0$ and $\lambda- g^{-1}(\mu)=0$). One can see that one of $\lambda$ and $\mu$ always depends on the other, this implies to that we have at most $n$ possibilities for $\lambda$ and $\mu$, then $\mid H\cap\left(L^{*}\right)^2\mid \le  18$. Moreover, the image $\Delta_1$ has at most $12$ elements. So, we conclude that $\mid H\mid =\mid H\cap\left(L^{*}\right)^2\mid \cdot \mid \Delta_1\mid \le  18\cdot 12=216$.\\

\item [b)] If $\Gal(L/\Q)$ contains an element that is a $3-$cycle, it corresponds to
  \[\alpha:[x_{0}:x_{1}:x_{2}][y_{0}:y_{1}:y_{2}]\mapsto [\frac{1}{\kappa}g(x_{1}):\frac{1}{\nu}g(x_{2}):g(x_{0})][\kappa\cdot g(y_{1}):\nu \cdot g(y_{2}):g(y_{0})],\]
for some $\kappa,\nu\in L^*$. Hence, $\alpha^{-1}\tau \alpha$ is then given by 
\[[x_{0}:x_{1}:x_{2}][y_{0}:y_{1}:y_{2}]\mapsto [ x_{0}: g^{-1}(\mu)x_{1} : \frac{g^{-1}(\mu)}{g^{-1}(\lambda)}x_{2}][ y_{0}:\frac{1}{g^{-1}(\mu)}y_{1}:\frac{g^{-1}(\lambda)}{g^{-1}(\mu)}y_{2}].\]
The equation $\tau=\alpha^{-1}\tau \alpha$ holds provided that $\mu$ and $\lambda$ satisfy the three conditions $\lambda\cdot g^{-1}(\mu)=1$ and $g^{-1}(\lambda)-\mu\cdot g^{-1}(\mu)=0$. The first condition gives $\lambda=\frac{1}{g^{-1}(\mu)}$, $i.e.$, $\lambda$ depends on $\mu$. We have at most $n$ possibilities for $\mu$, then $\mid H\cap\left(L^{*}\right)^2\mid \le  18$. So, we conclude that $\mid H\mid =\mid H\cap\left(L^{*}\right)^2\mid \cdot \mid \Delta_1\mid \le  18\cdot 12=216$.
\end{itemize}
The remaining cases now do not have any transposition nor any $3-$cycle. It only remains the cases where the group $\Gal(L/\Q)$ is trivial or of order $2$. If it is trivial, then $n\le 2$, so $\mid H\cap\left(L^{*}\right)^2\mid \le  2^2$. Hence, $\mid H\mid =\mid H\cap\left(L^{*}\right)^2\mid \cdot \mid \Delta_1\mid \le  2^2\cdot 12=48$. If $\Gal(L/\Q)$ is of order $2$, then $n\le 6$, so $\mid H\cap\left(L^{*}\right)^2\mid \le  6^2$. Hence, $\mid H\mid =\mid H\cap\left(L^{*}\right)^2\mid \cdot \mid \Delta_1\mid \le  6^2\cdot 12=432$.
\end{proof}

\section{\texorpdfstring{Del Pezzo surfaces of degree $5$}{Del Pezzo surfaces of degree 5}}

A complex Del Pezzo surface of degree $5$ is given by the blow-up of $4$ points of $\p^2_{\C}$, in general position. The following result is classical.
\begin{lem}\label{lemm-21}
 Let $k$ be any field and let $S$ be the blow-up of four points in general position, then the automorphism group $\Aut(S_k)$ is always $\sym_5$.
\end{lem}
\begin{proof}
  Let $S$ be the blow-up of the four points $A_1,..,A_4$ of $\p^2(k)$, in general position. Let $\pi:S \rightarrow\p^2$ be the blow-down, then the exceptional divisors on the surface $S$ are given by:
\begin{itemize}
\item $E_i=\pi^{-1}(A_i)$, the $4$ pull-backs of the points $A_i$ where $i=1,..,4$.
\item $l_{ij} = \pi^{-1}(A_iA_j)$ for $1\le  i,j \le  4,~i \neq j$, the $6$ strict pull-backs of the lines through $2$ of the $A_i’s$. 
\end{itemize}
Furthermore, each divisor intersects three others. By \cite[Proposition $5.1$]{MR2486798}, There are thus $5$ sets of $4$ skew exceptional divisors on the surface $S$, namely  $\{E_1,l_{23},l_{24},l_{34}\}$, $\{E_2,l_{13},l_{14},l_{34}\}$, $\{E_3,l_{12},l_{14},l_{24}\}$, $\{E_4,l_{23},l_{13},l_{23}\}$, $\{E_1, E_2, E_3,E_4\}$, and the action on these sets gives rise to an isomorphism $\rho: \Aut(S_k) \rightarrow \sym_5$.
\end{proof}

 \begin{ex}\label{ex-09}
    Let $S$ be the Del Pezzo surface of degree $5$ defined over $\Q$ that is the blow-up of the four points $A_1,..,A_4$ of $\p^2(\Q)$, in general position. Since under the action of linear automorphisms, all sets of four non-collinear points of $\p^2$ are equivalent, we may assume that the four points $A_1 =(1:0:0),~A_2 =(0:1:0),~A_3 =(0:0:1)$ and $A_4 =(1:1:1)$ and they are all defined over $\Q$. Furthermore, by Lemma \ref{lemm-21}, $\Aut(S_{\Q})\cong\sym_5$.
\end{ex}

\begin{lem}\label{lemm-20}
  Let $S$ be a Del Pezzo surface of degree $5$ defined over $\Q $. If $G\subseteq\Aut(S_{\Q})$ is a subgroup, then $\mid G\mid \le 120$. 
\end{lem}
\begin{proof}
   Let $S$ be a Del Pezzo surface of degree $5$ defined over $\Q$, and let $G\subseteq\Aut(S_{\Q})$. Since $\Aut(S_{\Q})\subseteq\Aut(S_{\C})$. By Lemma \ref{lemm-21}, $\Aut(S_{\C})$ has order $120$, so $\mid G\mid \le 120$. 
\end{proof}
 \section{\texorpdfstring{Finite subgroups of $\PGL_3(\Q)$}{Finite subgroups of PGL3}}
 \begin{thm}  \cite[Minkowski$^{\prime}$s theorem]{MR1580123}\label{thmm-05}, \cite[Theorem 1]{MR2336645}\\
Let $n\ge   1$ be an integer, and let $p$ be a prime number. $v_p(x)$ is the $p-$adic valuation of a rational number $x$, and $v_p(H)$ is the order of an $p-$Sylow of $\Z $. Define:
\[M(n,p)=[\frac{n}{p-1}]+[\frac{n}{p(p-1)}]+[\frac{n}{p^2(p-1)}]+\ldots \]
Then:
\begin{itemize}
    \item [(i)] If $H$ is a finite subgroup of $\mathrm{GL}_n(\Q )$, we have $v_p(H)\le  M(n,p)$.
    \item [(ii)] There exists a finite $p-$subgroup $H$ of $\mathrm{GL}_n(\Q )$ with $v_p(H) = M(n,p)$.
\end{itemize}
\end{thm}
Define \[M(n)=\prod_{p} p^{M(n,p)}.\]
Part $(i)$ of Theorem \ref{thmm-05} says that the order of any finite subgroup of $\mathrm{GL}_n(\Q )$ divides $M(n)$, and part $(ii)$ says that $M(n)$ is the smallest integer having this property. Hence $M(n)$ is a sharp multiplicative bound for $\mid H\mid $.\\
\begin{rmk}\label{remk-03}
    Since $M(3,2)=4,~M(3,3)=1$ and $M(3,p)=0$ for all primes $p>3$, we have 
    \[M(3)=2^4\cdot 3^1\prod_{p> 3} p^0=48,\] then the order of any finite subgroup $G$ in $\mathrm{GL}_3(\Q )$ divides $48$. Moreover, for each group $H\subset\mathrm{SL}_3(\Q )$, we have a group $G=\{\pm h\mid \in H\}\subset \mathrm{GL}_3(\Q )$. But $\mid G\mid =2\cdot\mid H\mid $, which means that the order of any finite subgroup $H$ in $\mathrm{SL}_3(\Q )$ has to divide $24$.
\end{rmk}
\begin{lem}\label{lemm-18}
    Let $K$ be a field, let $a,b \in K^*$ be elements, and let $m,n$ be coprime positive integers. If $a^m=b^n$, there exists $c \in K$ such that $a=c^n$ and $b=c^m$.
\end{lem}
\begin{proof}
    As $m,n$ are coprime, there exists $x,y$ in $\Z $ such that $nx+my=1$. We then define $c=a^xb^y$ and get
\[c^n=a^{nx}b^{ny}=a^{nx+my}=a,\]
\[c^m=a^{mx}b^{my}=b^{nx+my}=b\]
\end{proof}
\begin{lem}\cite[Lemma 3.5]{hu2022jordan}\label{lemma:hu}\\
    Let $K$ be a field, $ n\ge  2$ be an integer, and $r$ be an integer coprime to $n$. If $\alpha$ in $\PGL _n(K)$ is of order $r$, then there exists an element $A$ in $\mathrm{SL}_n(K)$ of order $r$ such that $\alpha$ is the class of $A$.
\end{lem}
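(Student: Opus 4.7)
The plan is to lift $\alpha$ to $\GL_n(K)$ and then rescale the lift by a scalar so that it lands in $\mathrm{SL}_n(K)$ and has $r$-th power equal to the identity. Choose any representative $B\in\GL_n(K)$ of $\alpha$. Since $\alpha^r=1$ in $\PGL_n(K)$, we have $B^r=\lambda\cdot I$ for some $\lambda\in K^*$. Writing $\delta=\det(B)$ and taking determinants of both sides of $B^r=\lambda I$ gives the compatibility relation $\delta^r=\lambda^n$.

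I want to replace $B$ by $A=\mu B$ for a scalar $\mu\in K^*$ to be chosen. One computes $\det(A)=\mu^n\delta$ and $A^r=\mu^r\lambda\cdot I$, so the two conditions ``$A\in \mathrm{SL}_n(K)$'' and ``$A^r=I$'' translate into the two equations $\mu^n=\delta^{-1}$ and $\mu^r=\lambda^{-1}$. These are consistent precisely because $(\delta^{-1})^r=\delta^{-r}=\lambda^{-n}=(\lambda^{-1})^n$; combined with the hypothesis $\gcd(n,r)=1$, Lemma \ref{lemm-26} applied to $a=\delta^{-1}$ and $b=\lambda^{-1}$ produces exactly such a common root $\mu\in K$.

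With this $\mu$, the matrix $A=\mu B$ lies in $\mathrm{SL}_n(K)$, satisfies $A^r=I$, and has the same image in $\PGL_n(K)$ as $B$, namely $\alpha$. The order of $A$ must therefore divide $r$; but since the class of $A$ in $\PGL_n(K)$ is $\alpha$, which has order exactly $r$, no proper divisor can occur, so $A$ has order exactly $r$.

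I do not expect a genuine obstacle here: the only subtlety is realising that the two constraints on $\mu$ are not independent but are forced to be compatible by the determinant identity $\delta^r=\lambda^n$, and that this is precisely the hypothesis of the preceding Lemma \ref{lemm-26}. Everything else is a one-line verification.
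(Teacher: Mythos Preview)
Your proof is correct and is essentially the same as the paper's: both lift $\alpha$ to some $B\in\GL_n(K)$ with $B^r=\lambda\cdot I$, use the determinant relation $\det(B)^r=\lambda^n$ together with Lemma~\ref{lemm-26} to find a scalar $\mu$, and rescale $B$ by $\mu^{\pm 1}$ to land in $\mathrm{SL}_n(K)$ with $r$-th power the identity. Your version is slightly more explicit in checking that the resulting $A$ has order exactly $r$ (not merely dividing $r$), which the paper leaves implicit.
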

\begin{proof}
Let $\alpha$ in $\PGL _n(K)$ be an element of order $r$, then there exists $B$ in $\mathrm{GL}_n(K)$ whose class in $\PGL _n(K)$ is $\alpha$ such that $B^r=\lambda\cdot Id$, where $Id$ is the identity matrix and $\lambda\in K^*$. The equation $B^r= \lambda\cdot Id$ implies, taking the determinant, that $\mathrm{det}(B)^r=\lambda^n$. This implies, by Lemma \ref{lemm-18}, that $\mathrm{det}(B)=\mu^n$ and $\lambda=\mu^r$ for some $\mu$ in $K^*$. Replacing $B$ with $B/\mu$ gives an element of $\mathrm{SL}_n(K)$ that has order $r$, and whose class is $\alpha$.
\end{proof} 
\begin{rmk}\label{remk-06} \cite[p.~124]{MR1176100}\\ Any finite subgroup of $\mathrm{GL}_n(\Q )$ is conjugate to a finite subgroup of $\mathrm{GL}_n(\Z )$.
\end{rmk}
\begin{lem}\label{element:O3P2}
    Let $\alpha\in\PGL_2(\Q)$ be an element of order $3$, then $\alpha$ or $\alpha^{-1}$ is conjugate to $\left(\begin{matrix}
       0&-1\\1&-1 
    \end{matrix}\right)$.
\end{lem}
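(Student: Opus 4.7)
The plan is to lift $\alpha$ to a matrix in $\SL_2(\Q)$ of order $3$, identify its characteristic polynomial, and then invoke the rational canonical form to compare with the target matrix.

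First, I would apply Lemma \ref{lemma:hu} with $n=2$ and $r=3$ (which are coprime) to obtain an element $A\in\SL_2(\Q)$ of order $3$ whose class in $\PGL_2(\Q)$ is $\alpha$. Since $A^3=\id$, the minimal polynomial of $A$ divides $x^3-1=(x-1)(x^2+x+1)$ in $\Q[x]$. The factor $x^2+x+1$ is irreducible over $\Q$, and the minimal polynomial has degree at most $2$. Since $A\ne \id$ (as $A$ has order $3$), its minimal polynomial cannot be $x-1$; hence it must equal $x^2+x+1$. Because $A$ is a $2\times 2$ matrix, this is also the characteristic polynomial of $A$.

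Second, I would observe that the target matrix $\begin{pmatrix}0&-1\\1&-1\end{pmatrix}$ has trace $-1$ and determinant $1$, hence characteristic polynomial $x^2+x+1$, and in fact it is the companion matrix of this polynomial. Since $x^2+x+1$ is irreducible over $\Q$, the rational canonical form of any $2\times 2$ matrix over $\Q$ with characteristic polynomial $x^2+x+1$ is exactly this companion matrix. Consequently $A$ is conjugate to $\begin{pmatrix}0&-1\\1&-1\end{pmatrix}$ inside $\GL_2(\Q)$, and passing to the quotient, $\alpha$ is conjugate to the corresponding class in $\PGL_2(\Q)$.

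There is essentially no obstacle here: the argument even establishes a stronger conclusion than stated, namely that $\alpha$ itself (and not only one of $\alpha,\alpha^{-1}$) is conjugate to the displayed matrix. The only subtle point is the existence of a lift of $\alpha$ of order $3$ in $\SL_2(\Q)$, which is exactly what Lemma \ref{lemma:hu} provides thanks to $\gcd(2,3)=1$; without this lemma one would a priori only get a lift satisfying $A^3=\lambda\cdot\id$ for some $\lambda\in\Q^*$, and would then need to extract a cube root of $\lambda$ in $\Q$, which is handled inside the proof of that lemma.
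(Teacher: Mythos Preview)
Your proof is correct and in fact cleaner than the paper's. Both arguments begin identically, invoking Lemma~\ref{lemma:hu} to lift $\alpha$ to an element $A\in\mathrm{SL}_2(\Q)$ of order $3$. From there the approaches diverge: the paper conjugates $A$ into $\mathrm{SL}_2(\Z)$ via Remark~\ref{rem:QZ} and then appeals to the free-product decomposition $\mathrm{SL}_2(\Z)/\{\pm\id\}\cong \Z/2\Z * \Z/3\Z$ to conclude that the class of $A$ is conjugate to that of $S$ or of $S^{-1}$. You instead stay in $\mathrm{GL}_2(\Q)$ and argue via the minimal polynomial and the rational canonical form, which is entirely elementary and avoids the structure theory of $\mathrm{SL}_2(\Z)$. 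As you note, your route even yields the sharper conclusion that $\alpha$ itself (not merely one of $\alpha,\alpha^{-1}$) is conjugate to the displayed matrix; the weaker ``or $\alpha^{-1}$'' in the paper's statement is an artefact of the free-product argument, which a priori only places a torsion element in a conjugate of the $\Z/3\Z$ factor without distinguishing the generator from its inverse.
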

\begin{proof}
   Let $\alpha\in\PGL _2(\Q )$ be an element of order $3$, By Lemma \ref{lemma:hu}, there exists an element $A$ in $\mathrm{SL}_2(\Q )$ of order $3$ whose class is $\alpha$. By Remark \ref{remk-06}, $A$ is conjugate to an element $B$ in $\mathrm{SL}_2(\Z )$ of whose class $\beta$ has order $3$. The quotient of the group $\mathrm{SL}_2(\Z )$ by its center $\{\pm id\}$ is a free product of $\Z /2 \Z $ and $\Z /3\Z $ generated by the classes $[S]$ of $S$ and $[T]$ of $T$, where 
   \[S=\left(\begin{array}{cc}
       0& -1 \\
       1 & -1
   \end{array}\right),~ T=\left(\begin{array}{cc}
       0& 1 \\
       -1 &0
   \end{array}\right),\]
   see \cite[Chap. 8]{MR340283}. Since the class of $S$ has order $3$, then $A$ or $A^{-1}$ is conjugate to the matrix $S$.
\end{proof}
\begin{lem}\label{Lemma:pgroup}
   Let $p$ be a prime number. If $\alpha$ is an element of order $p$ in $\PGL _3(\Q )$, then $p\le  3$. 
\end{lem}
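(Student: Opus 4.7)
The plan is to reduce the question to a divisibility constraint on orders of finite subgroups of $\mathrm{SL}_3(\Q)$, which has already been computed in Remark \ref{rem:sl3bound} as a consequence of Minkowski's theorem.

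First, if $p = 2$ or $p = 3$ there is nothing to show, so I may assume $p \ge 5$. In particular $p$ is coprime to $n = 3$, which puts us in position to apply Lemma \ref{lemma:hu} with $n = 3$ and $r = p$. This yields a matrix $A \in \mathrm{SL}_3(\Q)$ of order exactly $p$ whose class in $\PGL_3(\Q)$ equals $\alpha$.

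Next, the cyclic group $\langle A \rangle \subset \mathrm{SL}_3(\Q)$ is a finite subgroup of order $p$. By Remark \ref{rem:sl3bound}, the order of any finite subgroup of $\mathrm{SL}_3(\Q)$ divides $24 = 2^3 \cdot 3$. Therefore $p \mid 24$, which forces $p \in \{2, 3\}$, contradicting the hypothesis $p \ge 5$.

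The whole argument is essentially immediate once Lemma \ref{lemma:hu} lets us pass from $\PGL_3$ to $\mathrm{SL}_3$; there is no real obstacle. The only subtle point worth emphasizing in the write-up is that Lemma \ref{lemma:hu} requires $\gcd(r, n) = 1$, which is precisely why the case $p = 3$ must be handled separately (and is trivial), while the case $p \ge 5$ is the one where the lift to $\mathrm{SL}_3(\Q)$ is used.
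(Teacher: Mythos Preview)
Your proof is correct and follows essentially the same approach as the paper: assume $p>3$, use Lemma~\ref{lemma:hu} (which applies since $\gcd(p,3)=1$) to lift $\alpha$ to an element $A\in\mathrm{SL}_3(\Q)$ of order $p$, and then invoke Remark~\ref{rem:sl3bound} to conclude $p\mid 24$, a contradiction. Your write-up is slightly more careful in isolating why the coprimality hypothesis of Lemma~\ref{lemma:hu} is met, but the argument is the same.
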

\begin{proof}
   We prove that $p\le  3$ by contradiction. Let $p$ be a prime number $> 3$. Let $\alpha\in G$ be an element of order $p$. Since $(p,3)=1$, from Lemma \ref{lemma:hu}, there exists $A$ in $\mathrm{SL}_3(\Q )$ of order $p$ where $\alpha$ is the class of $A$. This implies that the group $H=\langle A\rangle$ in $\mathrm{SL}_3(\Q )$ has size $p$. By Remark \ref{remk-03}, $p$ divides $24$, so $p\in\{2,3\}$ which is a contradiction.
\end{proof}
 
\begin{rmk}\label{remk-04}
    If $H$ is a $2-$group in $\PGL _3(\Q )$, then by Remark \ref{remk-03} and Lemma \ref{lemma:hu}, we have $\mid H\mid \le  2^3$.
\end{rmk}
\begin{lem}\label{lemm-29}
Every element $\alpha$ of order $3$ of $\PGL _3(\Q )$ is conjugate to \[B=\left(\begin{matrix}
       0&0  & \lambda \\
          1&0  &0\\
           0&1 &0
    \end{matrix}\right),\]
    for some $\lambda\in\Q \setminus\{0\}$.
\end{lem}
\begin{proof}
    Let $A\in\mathrm{GL}_3(\Q )$ be an element whose class is $\alpha$ in $\PGL _3(\Q )$ has order $3$, then we have $A^3=\lambda\cdot Id$, where $Id$ is the identity matrix and $\lambda\in\Q \setminus\{0\}$. Let $v\in\Q ^3\setminus\{0\}$ be a nonzero vector.
  \begin{itemize}
        \item [(a)] If $v, Av$ and $A^2v$ are three linearly independent vectors, because $A$ sends $v$ to $A\cdot v,~A\cdot v$ to $A^2\cdot v$ and $A^2\cdot v$ to $A^3\cdot v=\lambda \cdot Id\cdot v=\lambda \cdot v$, we find that $A$ is conjugate to the matrix $B$.
        \item [(b)] If $v, Av$, and $A^2v$ are linearly dependent vectors, then we have two cases:
\end{itemize}
        \begin{itemize}
            \item [(b.1)] In case where $v, Av$ are linearly dependent vectors, then $Av=\mu v$, where $\mu \in \Q \setminus\{0\}$. Replacing $A$ by $A/\mu$,we can assume that $\mu=1$. By a coordinate change, $v=\left(\begin{array}{c}
                 1\\
                 0\\0 
            \end{array}\right)$, hence
\[A=\left(
  \begin{array}{ccc}
\begin{matrix}
    1
\end{matrix}
&
\begin{matrix}
    a_{1} & a_{2}
\end{matrix}\\
\begin{matrix}
    0\\0
\end{matrix}&
\begin{matrix}
    C
\end{matrix}
\end{array}
\right),
\]
where $a_i\in\Q ,$ for all $i=1,2$ and $C$ in $\mathrm{GL}_2(\Q )$. With this choice of $A$,

\[A^3=\left(
  \begin{array}{ccc}
\begin{matrix}
    1
\end{matrix}
&
\begin{matrix}
    u & v
\end{matrix}\\
\begin{matrix}
    0\\0
\end{matrix}&
\begin{matrix}
    C^3
\end{matrix}
\end{array}
\right)=Id,
\]
implies to $u=0,~v=0$ and the class of $C$ in $\PGL _2(\Q )$ is of order $3$, then by Lemma \ref{element:O3P2}, the class of $C$ is conjugate to the matrix $\left(\begin{matrix}
       0&-1\\1&-1 
    \end{matrix}\right)$ or its inverse $\left(\begin{array}{cc}
-1 & 1 
\\
 -1 & 0 
\end{array}\right)$. We may then replace $C$ by a conjugate and assume that $C= \mu\cdot\left(\begin{matrix}
       0&-1\\1&-1 
    \end{matrix}\right)$ or $C=\mu\cdot\left(\begin{matrix}
       0&-1\\1&-1 
    \end{matrix}\right)$ ), where $\mu\in\Q^*$. As $A^3=Id$, we obtain $\mu=1$. Hence 
    
    \begin{itemize}
        \item [(b.1.1)] If we assume that
$A=\left(
  \begin{array}{ccc}
\begin{matrix}
    1
\end{matrix}
&
\begin{matrix}
    a_{1} & a_{2}
\end{matrix}\\
\begin{matrix}
    0\\0
\end{matrix}&
\begin{matrix}
    0&-1\\1&-1
\end{matrix}
\end{array}
\right),
$ then $A^3=Id$ for all $a_{1},a_{2}\in\Q $. Moreover, we have $M^{-1}AM=B$, where 
\[M=\left(\begin{array}{ccc}
a_{1}+2 a_{2}-1 & -2 a_{1}-a_{2}-1 & a_{1}-a_{2}-1 
\\
 -3 & 3 & 0 
\\
 -3 & 0 & 3 
\end{array}
\right).\]
 
\item [(b.1.2)] If we assume that
$A=\left(\begin{array}{ccc}
1 & a_{1} & a_{2} 
\\
 0 & -1 & 1 
\\
 0 & -1 & 0 
\end{array}\right)
,$ then $A^3=Id$ for all $a_{1},a_{2}\in\Q $. Moreover, we have $M^{-1}AM=B$, where $\lambda=1$ and 
\[M=\left(\begin{array}{ccc}
a_{1}^{2}+a_{1} a_{2}+a_{2}^{2}+1 & -a_{1}^{2}-a_{1} a_{2}-a_{2}^{2}+1 & 1 
\\
 -2 a_{1}-a_{2} & a_{1}-a_{2} & a_{1}+2 a_{2} 
\\
 -a_{1}-2 a_{2} & 2 a_{1}+a_{2} & -a_{1}+a_{2}
\end{array}\right).\]

    \end{itemize}
   \item [(b.2)] In the case where $v, Av$ are linearly independent vectors but the three vectors are not linearly independent, up to the change of coordinates, we can assume that the two vectors $v$ and $Av$ are 
   \[\left(\begin{array}{c}
                 1\\
                 0\\0 
            \end{array}\right)~\mathrm{and}~\left(\begin{array}{c}
                 0\\
                 1\\0 
            \end{array}\right),\] and since $A$ sends the first one to the second one, and $A^2\cdot v$ is linearly dependent of $v$ and $A\cdot v$. We obtain 
\[
A=\left(
  \begin{array}{ccc}
0 & a_{1} & a_{2} 
\\
 1 & a_{3} & a_{4} 
\\
 0 & 0 & a_{5} 
\end{array}\right).
\]
where $a_i\in\Q ,$ for all $i\in\{1,..,5\}$. one can see that $a_5$ is an eigenvalue of the matrix $A$, and then we can calculate its eigenvector which is $\left(\begin{array}{c}
                 u\\
                 v\\1 
            \end{array}\right)$ for some $u,v\in\Q $. This reduces this case to the previous studied case.
\end{itemize}
\end{proof}
 
\begin{lem}\label{element3}
    Let $\alpha$ be an element of order $3^l$ of $\PGL_r(\Q )$ where $r\le 5$, then $l\le 1$.
\end{lem}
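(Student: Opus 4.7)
The plan is to split by whether $r = 3$ or not. For $r \in \{1,2,4,5\}$ we have $\gcd(3^l, r) = 1$, so Lemma \ref{lemma:hu} lifts $\alpha$ to $A \in \mathrm{SL}_r(\Q)$ of order exactly $3^l$ (the case $r=1$ is automatic since $\PGL_1(\Q)$ is trivial). The minimal polynomial of $A$ divides $x^{3^l}-1 = \prod_{i=0}^{l}\Phi_{3^i}(x)$ and must have $\Phi_{3^l}(x)$ as a factor, for otherwise it would divide $x^{3^{l-1}}-1$ and $A$ would have order at most $3^{l-1}$. Comparing degrees yields $2\cdot 3^{l-1} = \deg\Phi_{3^l} \le r \le 5$, which already forces $l \le 1$.

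The main obstacle is $r=3$, where Lemma \ref{lemma:hu} does not apply. I argue by contradiction: suppose $l \ge 2$ and set $\beta = \alpha^{3^{l-2}}$, which has order $9$ in $\PGL_3(\Q)$. Since $\beta^3$ has order $3$, Lemma \ref{Lemma:element3} says $\beta^3$ is conjugate in $\PGL_3(\Q)$ to $[B]$ where $B$ is the companion matrix of $x^3 - \lambda$ for some $\lambda \in \Q^*$; after conjugating we may assume $\beta^3 = [B]$. Diagonalising $B$ over $\overline{\Q}$ shows its eigenvalues form the multiset $\lambda^{1/3}\cdot\{1,\zeta_3,\zeta_3^2\}$, so any $g \in \GL_3(\Q)$ with $gBg^{-1} = \mu B$ forces $\mu\cdot\{1,\zeta_3,\zeta_3^2\} = \{1,\zeta_3,\zeta_3^2\}$ and hence $\mu \in \{1,\zeta_3,\zeta_3^2\} \cap \Q^* = \{1\}$; consequently the centraliser of $[B]$ in $\PGL_3(\Q)$ is exactly $\Q[B]^*/\Q^*$. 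Since $\beta$ commutes with $\beta^3 = [B]$, we may write $\beta = [C]$ with $C \in \Q[B]^*$, and $\beta^3 = [B]$ becomes $C^3 = \mu B$ for some $\mu \in \Q^*$.

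To finish I show this last equation has no solution, splitting on whether $\lambda$ is a cube in $\Q^*$. If not, then $\Q[B] \cong K := \Q[\theta]/(\theta^3-\lambda)$ is a cubic field, under which $B \leftrightarrow \theta$ and $C \leftrightarrow c \in K^*$ with $c^3 = \mu\theta$; taking $N_{K/\Q}$ and using $N_{K/\Q}(\theta) = \lambda$ gives $\lambda = (N_{K/\Q}(c)/\mu)^3$, contradicting that $\lambda$ is not a cube. If $\lambda = c_0^3$ with $c_0 \in \Q^*$, the factorisation $x^3 - \lambda = (x - c_0)(x^2 + c_0 x + c_0^2)$ yields $\Q[B] \cong \Q \times \Q(\zeta_3)$ with $B \mapsto (c_0, c_0\zeta_3)$; writing $C \mapsto (a, b) \in \Q^* \times \Q(\zeta_3)^*$, the relation $C^3 = \mu B$ forces $a^3 = \mu c_0$ and $b^3 = \mu c_0 \zeta_3$, hence $(b/a)^3 = \zeta_3$, which is impossible since the cube roots of $\zeta_3$ in $\overline{\Q}$ are primitive $9$th roots of unity and $\zeta_9 \notin \Q(\zeta_3)$. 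Both sub-cases yield the required contradiction, so $l \le 1$.
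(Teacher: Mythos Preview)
Your proof is correct and takes a genuinely different route from the paper's.

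The paper treats all $r\le 5$ uniformly: it lifts $\alpha$ to $A\in\GL_r(\Q)$ with $A^9=a\cdot Id$, then studies how the minimal polynomial of $A$ (of degree $\le 5$) can divide $X^9-a$. This requires an external reducibility criterion (the reference \cite{MR3657411}) to handle the factor $X^6+bX^3+b^2$, and ultimately reduces to showing that $a$ must be a ninth power, whence $A$ can be rescaled to satisfy $A^9=Id$ and then $A^3=Id$ by a cyclotomic degree count.

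You instead split according to whether $3\mid r$. For $r\in\{1,2,4,5\}$ you invoke Lemma~\ref{lemma:hu} to lift to $\mathrm{SL}_r(\Q)$ and finish immediately with the degree bound $\deg\Phi_{3^l}=2\cdot 3^{l-1}\le r$. For $r=3$, where that lift is unavailable, you use Lemma~\ref{Lemma:element3} to put $\beta^3$ in companion form, compute the centraliser of $[B]$ in $\PGL_3(\Q)$ as $\Q[B]^*/\Q^*$ (using that $B$ is nonderogatory and that the eigenvalue argument forces $\mu=1$), and then rule out $C^3=\mu B$ by a short norm computation in the \'etale algebra $\Q[B]\cong \Q[\theta]/(\theta^3-\lambda)$, splitting on whether $\lambda$ is a rational cube. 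Each branch yields a clean contradiction: a forbidden cube in $\Q$, or a ninth root of unity in $\Q(\zeta_3)$.

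What each approach buys: the paper's argument is dimension-uniform but leans on an outside irreducibility result; your argument is entirely self-contained within the paper's earlier lemmas and replaces the polynomial-factorisation analysis by a structural centraliser computation. Your $r=3$ case is the more conceptual of the two and could in principle be adapted to other small primes.
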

 
\begin{proof}  We prove that there is no element of order $9$, and thus there is no element of order $3^l$, where $l> 1$. Let $r\le 5$. Let $\alpha$ be an element of $\PGL _r(\Q )$ of order $9$. There is thus an element $A$ in $\mathrm{GL}_r(\Q )$ whose class is $\alpha$ and  $a\in \Q $ such that $A^9= a \cdot Id$. If we multiply $A$ by a suitable integer number that makes the entries of $A$ integers, we can assume that $a\in \Z $. The polynomial $X^9-a$ is then zero when we replace $X$ with $A$ and is thus a multiple of the minimal polynomial of $A$, which is of degree at most $5$. The minimal polynomial of $A$ has then to divide the polynomial $X^9-a$. If $a$ is not a cube in $\Z $, then $X^9-a$ is irreducible over $\Z $ \cite[Theorem 9.1]{MR1878556}. So we can we write $a=b^3$ where $b$ in $\Z \setminus\{0\}$. This implies that $(X^9-b^3)=(X^3-b)(x^6+bx^3+b^2)$. We observe that the minimal polynomial cannot divide $X^3-b$. Otherwise, we would have $A^3=b\cdot Id$, which implies that $\alpha$ is of order $3$. We now look at the polynomial $x^6+bx^3+b^2$. If this one is irreducible, we find a contradiction. In  \cite[Theorem 3.1]{MR3657411}, we find that this latter is reducible only when $b^2=n^3$ and $b=m^3-3mn$ for some $m,n$ in $\Q $. We then write $s=b/n$ in $\Q \setminus\{0\}$, and obtain $s^3=b^3/n^3=b^3/b^2=b$. We then find $a=b^3=s^9$. The matrix $A
^{\prime}=A/s$ is then such that $(A^{\prime})^9=Id$ and then we find an element of order $9$ in $\mathrm{GL}_r(\Q )$. This is impossible: The minimal polynomial should divide $X^9-1=(X - 1)(X^2 + X + 1)(X^6 + X^3 + 1)$. As these $3$ are irreducible, it divides $(X-1)(X^2+X+1)$ and thus $A^3=1$.
\end{proof}
 
\begin{lem}\label{lem:PGL3bound3}
    Let $G\le \PGL_3(\Q)$. If $\mid G\mid =3^l$, then $l\le1$.
\end{lem}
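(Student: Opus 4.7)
The plan is to argue by contradiction: assume $|G|=3^l$ with $l\ge 2$. Since every group of order $9$ is abelian, $G$ contains a subgroup isomorphic to either $\Z/9\Z$ or $(\Z/3\Z)^2$. The cyclic case is killed immediately by Lemma \ref{element3}, which forbids elements of order $9$ in $\PGL_3(\Q)$, so the entire burden is to rule out a copy of $(\Z/3\Z)^2$ inside $\PGL_3(\Q)$.

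To do this, let $\alpha,\beta\in\PGL_3(\Q)$ be commuting elements of order $3$ generating $(\Z/3\Z)^2$. By Lemma \ref{Lemma:element3}, after conjugation $\alpha$ is the class of the matrix $B$ of that lemma, with $B^3=\lambda I$ for some $\lambda\in\Q^*$. Lift $\beta$ to $M\in\GL_3(\Q)$. Commutativity in $\PGL_3$ gives $MBM^{-1}=\xi B$ for some $\xi\in\Q^*$; taking determinants forces $\xi^3=1$, hence $\xi=1$, so $M$ commutes with $B$ in $\GL_3(\Q)$. Since the minimal polynomial of $B$ is $x^3-\lambda$ (of degree equal to the size of $B$), $B$ is cyclic and its centralizer in $M_3(\Q)$ is the commutative algebra $\Q[B]\cong\Q[x]/(x^3-\lambda)$. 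Hence $M=aI+bB+cB^2$ for some $a,b,c\in\Q$. I would then split into two cases according to whether $\lambda$ is a cube in $\Q$.

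Case 1 ($\lambda\notin(\Q^*)^3$): $\Q[B]$ is the cubic field $\Q(\theta)$ with $\theta^3=\lambda$, and $\beta$ of order $3$ in $\PGL_3(\Q)$ translates to $(a+b\theta+c\theta^2)^3\in\Q$. Expanding this cube and imposing that the $\theta$- and $\theta^2$-coefficients vanish yields a system whose only $\Q$-solutions lie in $\Q^*\cup\Q^*\theta\cup\Q^*\theta^2$; the would-be extra family of solutions requires $\lambda^2\in(\Q^*)^3$, and a $p$-adic valuation argument then shows $3\mid v_p(\lambda)$ for every prime $p$, forcing $\lambda\in(\Q^*)^3$, contradicting the case assumption. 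Thus $[M]\in\langle[B]\rangle=\langle\alpha\rangle$, whence $\beta\in\langle\alpha\rangle$, contradicting $\langle\alpha,\beta\rangle\cong(\Z/3\Z)^2$. Case 2 ($\lambda\in(\Q^*)^3$): rescaling $B$ we may assume $B^3=I$, and then $\Q[B]\cong\Q\times\Q(\omega)$ by the Chinese Remainder Theorem applied to $x^3-1=(x-1)(x^2+x+1)$, where $\omega$ is a primitive cube root of unity. The scalar matrices $\Q^*\cdot I$ correspond to the diagonal $\Delta(\Q^*)$, and $\Q[B]^*/\Q^*$ is identified with $\Q(\omega)^*$ via $(u,v)\mapsto v/u$; its $3$-torsion is exactly $\mu_3=\{1,\omega,\omega^2\}$, lifting to $\{[I],[B],[B^2]\}=\langle\alpha\rangle$, so again $\beta\in\langle\alpha\rangle$, contradiction.

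The main obstacle is Case 1: one has to perform the explicit cubic expansion in the non-Galois cubic field $\Q(\theta)$, extract the two polynomial constraints from the vanishing of the $\theta$ and $\theta^2$ coefficients, and then kill the parasitic family of solutions via the valuation argument $\lambda^2\in(\Q^*)^3\Rightarrow\lambda\in(\Q^*)^3$. Case 2 is considerably shorter because the CRT decomposition reduces the torsion calculation to $\mu_3\subset\Q(\omega)^*$, which is immediate.
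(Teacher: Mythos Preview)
Your proof is correct and takes a genuinely different route from the paper. Both arguments reduce to ruling out a copy of $(\Z/3\Z)^2$ inside $\PGL_3(\Q)$ and both normalise one generator to the class of the companion matrix $B$ with $B^3=\lambda I$. From there, however, the paper works over $\C$: it passes to the three fixed points of $[B]$ in $\p^2(\C)$, notes that one is real and the other two form a complex-conjugate pair, and uses that the second generator is defined over $\Q$ (hence commutes with complex conjugation) to conclude that it fixes each eigenline of $B$ and is therefore a power of $[B]$. You instead stay entirely over $\Q$: the key observation that $MBM^{-1}=\xi B$ forces $\xi=1$ (so $M$ and $B$ already commute in $\GL_3(\Q)$) lets you place $M$ in the commutative algebra $\Q[B]\cong\Q[x]/(x^3-\lambda)$, after which the problem becomes computing the $3$-torsion of $\Q[B]^*/\Q^*$, handled by your case split on whether $\lambda$ is a cube. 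The paper's approach is more uniform (no case distinction on $\lambda$) and more geometric; yours is purely arithmetic, avoids any base change, and would transport more readily to other ground fields. A minor simplification in your Case~1: the alternative elimination $c\cdot(\text{I})-b\cdot(\text{II})$ gives $a(\lambda c^3-b^3)=0$ directly, so when $a,b,c\neq 0$ one obtains $\lambda=(b/c)^3\in(\Q^*)^3$ immediately, without the detour through $\lambda^2\in(\Q^*)^3$ and the valuation argument.
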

 
\begin{proof}
    Let us assume, for contradiction, the existence of $G\subset\PGL _3(\Q )$ a group of order $3^l$ where $l> 1$. 
    Since $G$ is a $3-$group, from \cite[Proposition 3.9]{MR2330890}, the center $G$ of the group $G$ is not trivial and by Lemma \ref{element3}, except the identity, every element in $Z(G)$ is of order $3$. By Lemma \ref{lemm-29}, we can assume that $Z(G)$ contains the class $\beta$ of $B$, where 
    \[B=\left(\begin{matrix}
       0&0  & \lambda \\
          1&0  &0\\
           0&1 &0
    \end{matrix}\right),~\lambda\in\Q \setminus\{0\}.\]
    Let $\alpha\in G\setminus Z(G)$ be an element. By definition, the elements in the center of a group commute with all elements of the group $G$. So we need to find a matrix $A$ whose class $\alpha$ in $\PGL _3(\Q )$ has order $3$ and commutes with the class $\beta$. This implies that $\alpha$ fixes the set of $3$ fixed points of $B$. The three fixed points are in $\p^2(\C )$ and given by the classes of the three eigenvectors
\[v_1=\left(\begin{matrix}
    b^2\\b\\1
\end{matrix}\right),~v_2=\left(\begin{matrix}
    b^2 \omega\\b\omega^2\\1
\end{matrix}\right),~v_3=\left(\begin{matrix}
    b^2 \omega^2\\b\omega\\1
\end{matrix}\right),\]
where $\omega$ is the primitive cubic root of unity and $\lambda=b^3,~b\in\R $. Since one of the fixed points is real and the other two are not, this implies that the class $\alpha$ can not permute transitively the three fixed points, so either $v_1$ is fixed and $\alpha$ permute $v_2,v_3$, which is not possible as $\alpha$ is of order $3$, or every $v_i$ is fixed by $\alpha$ for $i=1,2,3$. So $Av_i=\alpha_i v_i$ where $\alpha_i\in\C $ for $i=1,2,3$. The matrix $A$ is conjugated in $\PGL _3(\C )$ to the class of the matrix
 \[C=\left(\begin{matrix}
       \alpha_1&0  & 0 \\
          0& \alpha_2 &0\\
           0&0 & \alpha_3
    \end{matrix}\right).\]
The class of $C$ has order $3$, so $ \alpha_1^3= \alpha_2^3= \alpha_3^3$. 
We have $\bar{A}=A$, so $\overline{Av_i}=\overline{\alpha_iv_i}\Rightarrow A\bar{v_i}=\bar{\alpha_i}\bar{v_i}$. Since $\bar{v_1}=v_1$, this implies that $\bar{\alpha_1}=\alpha_1$. The same for $\bar{v_2}=v_3$ implies that $\bar{\alpha_2}=\alpha_3$. We then have three possibilities for $\alpha_i$: which then only give three possibilities for $\alpha$, namely $id,~\beta$ and $\beta^2$
\begin{itemize}
    \item [a)]  $\alpha_2=\alpha_1\Rightarrow \alpha_3=\alpha_1\Rightarrow \alpha_1=\alpha_2=\alpha_3 \Rightarrow \alpha=Id$,
    \item [b)]  $\alpha_2=\omega\alpha_1\Rightarrow \alpha_3=\omega^2\alpha_1 \Rightarrow \alpha=\beta$,
    \item [c)]  $\alpha_2=\omega^2\alpha_1\Rightarrow \alpha_3=\omega\alpha_1 \Rightarrow \alpha=\beta^{-1}$.
 \end{itemize}
We find that $\alpha\in \langle \beta\rangle \subseteq Z(G)$ is a contradiction.
\end{proof}
\begin{lem}\label{lemm-26}
    If $G\subset\PGL _3(\Q )$ is a finite subgroup, then $\mid G\mid $ divides $24$.
\end{lem}
\begin{proof}
Let $G\subset\PGL _3(\Q )$ be a subgroup of finite order $n> 1$. let $p$ be a prime number divides $n$, then by Lemma \ref{Lemma:pgroup}, $p\le 3$. So we can write $n=2^{r_1}\cdot 3^{r_2}$. By Remark \ref{remk-04}, $r_1\le 3$. By Lemma \ref{lem:PGL3bound3}, $r_2\le 1$. This implies that $n$ divides $2^3\cdot 3^1=24$.
\end{proof}

\begin{ex} \label{ex-07}   
Let $G$ be the finite group in $\PGL _3(\Q )$ generated by the following three elements,
\[\rho=\left(\begin{matrix}
       - 1&0  & 0 \\
          0& - 1 &0\\
           0&0 & 1
    \end{matrix}\right),~~\sigma=\left(\begin{matrix}
      0&1  & 0 \\
          1& 0 &0\\
           0&0 &1
    \end{matrix}\right),~~\tau=\left(\begin{matrix}
       0&1  & 0 \\
          0& 0 &1\\
           1&0 &0
    \end{matrix}\right).\]
In this case, we have $\rho^2=\sigma^2=\tau^3=Id$, and $\mid G\mid =24$. Moreover, $G\cong (\Z /2\Z )^2\rtimes \sym_3$ 
\end{ex}
  \section{\texorpdfstring{Del Pezzo surfaces of degree $4$}{Del Pezzo surfaces of degree 4}}

Let $S$ be a Del Pezzo surface of degree $4$ defined over $\Q $. Working over the field $\C $ of complex numbers, $S$ can be viewed as the blow-up of five points in general position (no 3 collinear). Writing $A_1 = [1 : 0 : 0],~A_2 = [0 : 1 : 0],~A_3 = [0 : 0 : 1],~
A_4 = [1 : 1 : 1]$ and $A_5 = [a: b: c]$ the five points, the exceptional divisors on $S$ are:
\begin{itemize}
    \item $E_1 = \pi^{-1}(A_1), \ldots ,E_5 = \pi^{-1}(A_5)$, the $5$ pull-backs of the points $A_1,\ldots ,A_5$.
  \item $l_{i j} =  \pi^{-1}(A_i A_j)$ for $1 \le  i, j \le  5,~i\neq j$, the $10$ strict pull-backs of the lines through $2$ of the $A_i’s$.
   \item $C=\pi^{-1}(\mathcal{C})$, the strict pull-back of the conic $\mathcal{C}$ of $\p^2$ through $A_1,A_2, \ldots ,A_5$.
\end{itemize}
\begin{defi}\label{def-16}
   A pair $\{A, B\}$ of classes of $\C-$divisors where $A,B \in Pic_C(S)$ is an exceptional pair if $A$ and $B$ are $\C -$divisors, $A^2 = B^2 = 0$ and $(A + B) = -K_S$. This implies that $AB = 2$ and $AK_S = BK_S = -2$.
\end{defi}    
\begin{lem}\cite[Lemma $9.11$]{MR2486798}\label{lemm-14}
    \begin{itemize}
        \item There are $5$ exceptional pairs on the surface $S_{\C }$, namely $\{L-E_i, 2L-\Sigma_{j\neq i} E_j\} =\{L-E_i,-K_S-L+E_i\}$, for $i = 1, \ldots , 5.$
 \item Geometrically, these correspond to the lines through one of the five blown-up points, and the conics through the other four points.
    \end{itemize}
\end{lem}   
 
The action of $\Aut_{\C }(S)$ on the exceptional pairs gives rise to a homomorphism
$\rho : \Aut_{\C }(S) \rightarrow \sym_5$. Denoting its image and kernel respectively by $H_S$ and $G_S$, then we have 
 
\begin{lem}\cite[Lemma $9.11$]{MR2486798}\label{lemm-15}
    \begin{itemize}
\item The following exact sequence splits.
\begin{equation}\label{exact2:DelPezzo4}
    1\rightarrow G_{S}\rightarrow \Aut_{\C }(S)\rightarrow H_{S}\rightarrow 1
\end{equation}
\item The homomorphism $\gamma:G_S\rightarrow(\F _2)^5$, induces an isomorphism $G_{S}\rightarrow\{(a_0,a_1,a_2,a_3,a_4)\in(\F _2)^5\mid \Sigma_{i=0}^{4} a_i=0\}\cong(\F _2)^4$.
\item  $H_{S}\cong\{h\in \PGL _3(\C )\mid h(\{A_1,A_2,A_3,A_4,A_5\})=\{A_1,A_2,A_3,A_4,A_5\}\}\subset \sym_5$.
\item $\Aut_{\C }(S)\cong G_{S}\rtimes H_{S}$, where the group $H_{S}\subset\sym_5$ acts on $G_{S}$ by permuting the $5$ coordinates of its elements.
\end{itemize}
\end{lem}
 
\begin{lem}\label{lemm-16}
Let $S$ be a Del Pezzo surface of degree $4$ defined over $\Q $ obtained by the blow-up of five points of $\p^2(\Q )$ in general position, then we have $\mid \Aut(S)\mid \le 32$.
\end{lem}
 
\begin{proof}
Let $S$ be a Del Pezzo surface of degree $4$ defined over $\Q $ and this surface $S$ is obtained by the blow-up of five points of $\p^2(\Q )$ in general position. We may assume that the five points are $A_1 = [1 : 0 : 0],~A_2 = [0: 1 : 0],~A_3 = [0 : 0: 1],~A_4 = [1: 1: 1]$ and $A_5 = [a: b: c]$, where $A_5$ is a point not aligned with any two of the others, and $a,b,c\in \Q \setminus\{0\}$. Since the five points are defined over $\Q $ and $\Q \subseteq\C $, then by Lemma \ref{lemm-15}, we have $\Aut(S_{\Q })\cong (\Z /2\Z )^4\rtimes H_{S}$ and the group $H_{S}\subset\sym_5$ acts on $G_{S}$ by permuting the $5$ coordinates of its elements. It remains to study the group $H_S$. We prove that any nontrivial element of $H_S$ is a $2\times2-$cycle.
\begin{enumerate}
 
 \item Let $H_S$ contains a $4-$cycle, namely $(A_1 A_3A_2A_4)$. The automorphism is then $[x:y:z] \mapsto [y : y-z: y-x]$. The point $A_5$ must be a fixed point, so we have either $b=\mathrm{i}a$ where $\mathrm{i}=\sqrt{-1}$  or $\{b=a,c=0\}$, then we have a contradiction: it is clear in the first case as $b\notin \Q $ and in the second case, we have $[a:b:c]=[1:1:0]$ which is collinear with $A_1$ and $A_2$.
 
\item Let $H_S$ contains a $3-$cycle, namely $(A_1 A_2A_5)$. The automorphism is then $\varphi:[x : y : z] \mapsto [a^2y : -b^2x+aby : acy-ab z]$ for some $a,b,c$ in $\Q $ with $a\cdot b\neq0$. Since $\varphi$ sends $A_1$ on $A_2$, $A_2$ on $A_5$, and $A_5$ on $A_1$, then $A_3$ and $A_4$ must be fixed, so we have the equation $\varphi(A_4)=[a^2:-b(a-b):a(c-b)]$, so $a\in\{\omega b,\omega^2b\}$, where $\omega$ is the $3rd$ root of unity which is a contradiction.
 
\item Let $H_S$ contains a $3\times2-$cycle, namely $(A_1 A_2A_5)(A_3A_4)$. The automorphism is then $\varphi:[x : y : z] \mapsto [a^2y : -b^2x+aby : acy-ab z]$ for some $a,b,c$ in $\Q $ with $a\cdot b\neq0$. Since $\varphi$ sends $A_1$ on $A_2$, $A_2$ on $A_5$, $A_5$ on $A_1$, and then $A_4$ has to be sent on $A_3$, but $\varphi(A_4)=[a^2:-b(a-b):a(c-b)]\ne[0,0,1]$ for all $a,b,c\in\Q $, so $A_4$ can not be sent to $A_3$ which is a contradiction.
 
\item Let $H_S$ contains a $2-$cycle, namely $(A_1A_5)$. The automorphism is then has to fix the other three points $A_2,A_3$, and $A_4$. So it is given by $\varphi:[x : y : z] \mapsto [ax : bx+(a-b)y : cx+(a-c) z]$ for some $a,b,c$ in $\Q $ with $a\cdot b\cdot c\neq0$. Since $\varphi$ sends $A_1$ on $A_5$, then $A_5$ has to be sent on $A_1$, but $\varphi(A_5)=[a^2:2ab-b^2:2ac-c^2]=[1:0:0]$, so we have either $c=b$ and $a=b/2$ which is a contradiction: hence $A_5=[1,2,2]$ which is collinear with $A_1$ and $A_4$.
 
\item Let $H_S$ contains a $5-$cycle, namely $(A_1 A_4A_3A_2A_5)$. The automorphism is then $[x : y : z] \mapsto [a(x-y) : ax-b y-(a-b)z : ax-cy]$ for some $a,b,c$ in $\Q $ with $a\cdot b\neq0$. Since the automorphism sends $A_1$ on $A_4$, $A_4$ on $A_3$, $A_3$ on $A_2$, and $A_2$ on $A_5$, then the image of $A_5$ is $[a^2-ab:a^2-b^2-(a-b)c:a^2-bc]$ must be $A_1$, then $a^3-2a^2b-b^3=0$, assuming $a=t\cdot b$ implies that $b^3(t^{3}-2 t^{2}-1)=0$. then either $a=b=0$ or $t^{3}-2 t^{2}-1=0$, but the discriminant of the polynomial $t^{3}-2 t^{2}-1$ is $-59$, then polynomial $t^{3}-2 t^{2}-1$ has one real root and two complex conjugate roots, and then the real root is $x_0=(r^{\frac{2}{3}}+4 r^{\frac{1}{3}}+16)/(6 r^{\frac{1}{3}})$ where $r=172+12 \sqrt{177}$, which is not rational, so there is no non-trivial solution in $\Q $, which is a contradiction.\end{enumerate}

As $H_S$ only contains $2\times2-$cycle, it has order $1,~2$ or $4$. One then needs to remove the case where $\mid H_S\mid =4$ to finish the proof. In that case, $H_S$ would be, up to conjugation, the Klein group $\langle(A_1 A_2)(A_3 A_4), (A_1 A_3)(A_2 A_4)\rangle $  and thus would be generated by $[x:y:z]\mapsto[z-y:z-x:z]$ and $[x:y:z]\mapsto[y-z:y:y-x]$, so $A_5$ should be fixed by both elements. However, the only three points of $\p^2(\C)$ fixed by these two involutions are $[0:1:1],[1:0:1]$ and $[1:1:0]$, and any of these is collinear with two of the points $A_1,A_2,A_3,A_4$. This leads to a contradiction, that shows that $H_S$ has order $1$ or $2$, so $\Aut(S)$ has order $16$ or $32$.
\end{proof}
 
\begin{ex}\label{ex-06}
    Let $L=\mathbb{Q}[\omega]$, where $\omega$ is a primitive third root of unity. We consider the standard Del Pezzo surface of degree $6$  given by 
\[X_L:=\{([x_0:x_1:x_2],[y_0:y_1:y_2])\in \mathbb{P}^2_L \times \mathbb{P}^2_L\mid x_0y_0=x_1y_1=x_2y_2\}\]
as in Section~\ref{section:6}, and consider the action of $\Gal(L/\mathbb{Q})$ on $X_L$ given by 
\[([x_0:x_1:x_2],[y_0:y_1:y_2])\mapsto ([\bar{y_0}:\bar{y_1}:\bar{y_2}],[\bar{x_0}:\bar{x_1}:\bar{x_2}]).\]
This gives to $X$ a unique structure of a surface defined over $\mathbb{Q}$. It contains the $\mathbb{Q}$-rational point $([1:1:1],[1:1:1])$ and is thus rational \cite[Theorem 3.15]{MR225780}. We choose the two other $L$-rational points 
\begin{equation*}
    q_1=([1:\omega:\omega^2],[1:\omega^2:\omega]), q_2=([1:\omega^2:\omega],[1:\omega:\omega^2]),
\end{equation*}
denote by $Y$ the blow-up $Y$ of $X$ at the two points $q_1,~q_2$. We now prove that $\Aut(Y)$ is isomorphic to $(\mathbb{Z}/2\mathbb{Z})^4\rtimes \mathrm{Sym}_3$ and thus has $2^4\cdot 6=96$ elements.\\

Working over $L$, the surface $Y$ is the blow-up of the five points $p_1=[1:\omega:\omega^2]$, $p_2=[1:\omega^2:\omega]$, $p_3=[1:0:0]$, $p_4=[0:1:0]$and $p_5=[0:0:1]$. The automorphism $\Aut_L(Y)$ is then $(\mathbb{Z}/2\mathbb{Z})^4\rtimes H$, where $H\simeq \mathrm{Sym}_3$ is obtained by the permutations on the three coordinates. The Galois group corresponds to $(1,1,0,0,0)$, that commutes with all elements of $\Aut_L(Y)$, so $\Aut_{\mathbb{Q}}(Y)=\Aut_L(Y)$ is isomorphic to $(\mathbb{Z}/2\mathbb{Z})^4\rtimes \mathrm{Sym}_3$.
\end{ex}
 
\begin{lem}\label{lemm-25}
Let $S$ be a Del Pezzo surface of degree $4$ defined over $\Q$, then $\mid\Aut_{\Q}(S)\mid\le 96$.
\end{lem}
 
\begin{proof}
 Let us assume that $S$ is a Del Pezzo surface of degree $4$ defined over $\Q $. Let $L/\Q $ be the smallest Galois field extension such that the $(-1)-$curves on $S$ are defined over $L$. The surface $S$ has sixteen $(-1)-$curves defined over $L$ but not necessarily over $\Q $. By Lemma \ref{lemm-15}, we have $\Aut_{\C }(S)\cong G_S\rtimes H_S$, $G_S\cong (\F_2)^4$, and the group $H_S\subset \sym_5$ is isomorphic to either $\Z /2\Z ,~\Z / 4\Z ,~\sym_3$ or $D_5$ (the dihedral group of order $10$). Since the surface $S$ is defined over $\Q $, then $\Aut_{\Q }(S)\subseteq\Aut_{L}(S)\subseteq\Aut_{\C }(S)$. If $\mid \Aut_{\Q }(S)\mid > 96$, then $\Aut_{\Q }(S)\cong(\F _2)^4 \rtimes D_5$. But in this case, every element in the Galois group $\Gal(L/\Q )$ should commute with the whole group $D_5$. Since the only element of the group $(\F_2^4)\rtimes D_5$ that commutes with that whole group is the identity, then the Galois group acts trivially. It follows that every $(-1)-$curve is defined over $\Q $, hence by Lemma \ref{lemm-16}, we know that $\mid \Aut_{\Q }(S)\mid \le 32$ which is a contradiction, so $H_S$ can not be $D_5$.
\end{proof}
 
\section{\texorpdfstring{Del Pezzo surfaces of degree $3$}{Del Pezzo surfaces of degree 3}}
Let $S$ be a rational Del Pezzo surface of degree $3$ defined over $\Q $, we want to find the biggest automorphism group $G\subseteq\Aut_{\Q }(S)$. The classification of all automorphism groups of $S$ over $\C $ has been studied in \cite[Theorem 9.5.6]{MR2964027}, and they explained that the Del Pezzo surface of degree $3$ is isomorphic to a one contained in one of the $11$ families of smooth cubic surfaces given in the list.
\begin{rmk}\label{remk-05}
    In Theorem \ref{thmm-05}, since $M(4,2)=7,~M(4,3)=2,~M(4,5)=1,$ and $M(4,p)=0$ for all primes $p> 5$, then we have 
    \[M(4)=2^7\cdot 3^2\cdot5^1\cdot\prod_{p> 5} p^0=2^7\cdot 3^2\cdot5.\] 
    So the order of any finite subgroup $G$ in $\mathrm{GL}_4(\Q )$ divides $2^7\cdot 3^2\cdot5$.
\end{rmk}
 
\begin{lem}\label{lemm-30}
    Let $G$ be a finite subgroup of $\PGL _{n}(\Q )$ and let $F\in \Q [x_0,\ldots ,x_{n-1}]$ be an irreducible homogeneous polynomial such that $G$ preserves $F$. This means that for each $g$ in $G$ and each matrix $M $ of $ \mathrm{GL}_n(\Q )$ such that g is the class of $M$, the polynomial $F \circ M$ (obtained by acting on the coordinates linearly) is a multiple of $F$. Then, if $d=deg(F)$ and $n$ are coprime, there exists a finite subgroup of $\mathrm{GL}_{n}(\Q )$ that surjects to $G$, is twice bigger, and contains $-Id$.
\end{lem} 
 
\begin{proof}
     Let $g$ be any element of $G$ and let $M$ be the corresponding matrix. We have $F\circ M=\lambda\cdot F$ for some  $\lambda\in\Q $. Let $m$ be the order of $g$. Then, $M^m$ is a multiple of the identity, which gives $\mu\in\Q $ such that $M^m=\mu\cdot Id$. We write $a=det(M)$. As $M^m=\mu\cdot Id$, we apply the determinant on both sides and obtain $a^m=\mu^n$. Moreover, $F \circ M=\lambda\cdot F$, so $F\circ (M^i)=\lambda^i \cdot F$ for each $i\ge  1$. In particular, $F\circ (M^m)=\lambda^m\cdot F$. As $M^m=\mu\cdot Id$, we find $F\circ (\mu\cdot Id)=F\cdot\mu^d $ (recall that $d$ is the degree of $F$), so $\mu^d=\lambda^m$. By Lemma \ref{lemm-18}, there exist $c\in\Q^*$ such that $\mu=c^m$. Replacing $M$ with $M/c$ we do not change the class $g$ in $G$, but may assume that $M^m=Id$. This implies that $\mathrm{det}(M)^m=1$, so $\mathrm{det}(M)\in \{-1,1\}$. We can then lift any element of $g \in G$ to an element of $\mathrm{GL}_n(\Q )$ that has determinant $ 1$ or $-1$. This gives exactly two possibilities for the lift, namely $M$ or $-M$. The set of such matrices makes a finite group twice bigger than $G$. 
\end{proof}
\begin{rmk}\label{remk-02} 
 Let $H$ be a finite subgroup of $\PGL _{4}(\Q )$ and let $F\in \Q [x_0,\ldots ,x_{n-1}]$ be an irreducible homogeneous polynomial of degree coprime to 4. If $H$ preserves $F$, then by Remark \ref{remk-05} together with Lemma \ref{lemm-30}, $\mid H\mid $ has to divide $2^6\cdot 3^2\cdot5$.
\end{rmk}
 
\begin{ex}\label{ex-01}
    Let $A=\left(\begin{matrix}
       0&-1 &0&0 \\
         1& -1 &0&0 \\
0& 0 &1&0 \\
0& 0 &0&1 \\
    \end{matrix}\right) \mathrm{~and~}B=\left(\begin{matrix}
       1&0 &0&0 \\
         0& 1 &0&0 \\
          0& 0 &0&-1 \\
          0& 0 &1&-1 \\ 
    \end{matrix}\right)$, then the class $\alpha$ (resp. $\beta$) of $A$ (resp. $B$) in $\PGL_4(\Q)$ is of order $3$. Furthermore, the group $G$ generated by $\alpha$ and $\beta$ is isomorphic to $(\Z/3\Z)^2$, and
 \[G=\langle\alpha,\beta~\mid ~\alpha^3=\beta^3=id,\alpha\beta=\beta\alpha\rangle.\]
\end{ex}
 
\begin{lem}\label{lemm-28}
    Let $S$ be a rational Del Pezzo surface of degree $3$ defined over $\Q $, then $\mid \Aut_{\Q }(S)\mid \le  120$.
\end{lem}
\begin{proof}
 Let $S$ be a rational Del Pezzo surface defined over $\Q$. Since $\Aut_{\Q}(S)\subseteq\Aut_{\C }(S)$, by \cite[Theorem 9.5.6]{MR2964027}, we only need to study the automorphism groups $G\subset\Aut_{\C}(S)$ such that $\mid G\mid > 120$, hence we have only one type to study. If the surface is isomorphic to the Fermat surface $V (t_0^3+ t_1^3+t_2^3+t_3^3)$, then $\Aut_{\C }(S)\cong (\Z /3\Z )^3\times \mathfrak{S}_4$ where $\mathfrak{S}_4$ is the group of permutations, then $\mid G\mid$ has to divide $648=2^3\cdot3^4$. Since $G\subseteq\PGL _4(\Q )$, then by Remark \ref{remk-02}, $\mid G\mid $ has to divide $2^6\cdot3^2\cdot5$, hence $\mid G\mid \le 2^3\cdot3^2=48$. 
\end{proof}
 
\begin{ex}\label{ex-02}
    Let $S$ be a Del Pezzo surface that is isomorphic to the Clebsch diagonal surface $V(t_0^2\cdot t_1+ t_1^2\cdot t_3+t_2^2\cdot t_0+t_3^2\cdot t_2)$. One can embed the surface $S$ in $\p^4$ and show that $S$ is isomorphic to the following surface $X$ in $\p^4$: 
    \[\sum_{i=0}^{4}X_{i}=\sum_{i=0}^{4}X_{i}^3=0.\]
    This surface  $X$ contains $2$ disjoint $(-1)-$curves defined over $\Q$ , given by
  \[ \begin{array}{l}
          E_1=\{[0:a:-a:b:-b]\mid [a:b]\in \p^1\},\\
        E_2=\{[a:0:b:-a:-b]\mid [a:b]\in \p^1\}.\\
    \end{array}\]
By contracting these two $(-1)-$curves, there is thus a contraction morphism $\eta: X\rightarrow S_5$, where $S_5$ is a Del Pezzo surface of degree $5$ with at least $2$ rational points, so the surface $S_5$ is a rational surface. Indeed, this follows from \cite[Theorem 3.15]{MR225780}. Furthermore, the group $G\cong\mathfrak{S}_5$ has order $120$.
\end{ex}
  \section{\texorpdfstring{Automorphism group of $\p^1\times \p^1$ over $\Q$}{Automorphism group over Q}}
Let $G$ be a finite group in $\PGL _2(\Q )$. \cite[Theorem 4.2.]{MR2681719} shows that the group $G$ can be of size $12$ at most. Moreover, the $2-$groups are of order at most $2^2$ and the $3-$groups are of order at most $3$.
\begin{ex} \label{ex-04} Let $A=\left(\begin{matrix}
       2&-1  \\
          1& 1 
    \end{matrix}\right) \mathrm{~and~}B=\left(\begin{matrix}
       0&1  \\
          1& 0 
    \end{matrix}\right)$, then the class $\alpha$ (resp. $\beta$) of $A$ (resp. $B$) in $\PGL _2(\Q )$ is of order $6$ (resp. $2$). Furthermore, the group generated by $\alpha$ and $\beta$ is the dihedral group $D_6$, where
 \[D_6=\langle\alpha,\beta~\mid ~\alpha^6=\beta^2=(\beta\alpha)^2=id~\rangle.\]
\end{ex}
 
\begin{ex}\label{ex-03}\cite{MR2681719}
Let $G\subset\PGL_2(\Q)$ be a finite subgroup, then the group $H:=(G\times G)\rtimes\Z/2\Z$ is a finite subgroup of $\Aut(\p^1\times \p^1)$. Furthermore, if $G\cong D_6$, then the group $H$ is of order  $288$.
\end{ex}
 \begin{lem}\label{lemm-27}
     Any finite subgroup of $\Aut(\p^1_{\Q}\times \p^1_{\Q})$ is always of order at most $288$.
 \end{lem}
\begin{proof}
    Since every finite subgroup of $G\subset\PGL_2(\Q)$ is of order at most $12$, then the group $H:=(G\times G)\rtimes\Z/2\Z$ is a finite subgroup of $\Aut(\p^1_{\Q}\times \p^1_{\Q})$, and thus $H$ has at most $288$.
\end{proof}
  \section{\texorpdfstring{Del Pezzo surfaces of degree $2$}{Del Pezzo surfaces of degree 2}}
A Del Pezzo surface of degree $2$ defined over $\C $ is given by the blow-up of $7$ points of $\p^2$ in general position (no six of them are on a conic and no three are collinear). Let $A_1, \ldots , A_7 \in\p^2$ be the $7$ points and $\pi:S\rightarrow \p^2$ the blow-down. The system of cubics passing through the $7$ points gives a morphism $\eta:S\rightarrow \p^2$ whose linear system is $\mid - K_S\mid $ and which is a double covering ramified over a smooth quartic curve $\Gamma$ of $\p^2$. (See \cite{MR1802909, MR2066103, MR2641179}):
\begin{center}
    \begin{tikzcd}
S \arrow["\pi^{\prime}"]{d} \arrow["\eta"]{rrrd} \\
\p^2\arrow[dashed,"{C^3(A_1,..,A_7)}~~~~~"]{rrr}&&& \p^2.
\end{tikzcd}
\end{center}
\begin{lem}\label{lemm-22}
    Let $S$ be a Del Pezzo surface of degree $2$ defined over $\Q $. If $G\subset\Aut(S)$ is finite subgroup, then $\mid G\mid \le 48$. 
\end{lem}
 
\begin{proof}
    Let $S$ be a Del Pezzo surface of degree $2$ defined over $\Q $. By \cite[Theorem III.3.5]{MR1440180} and \cite[Corollary 3.54]{MR2062787}, over $\C $, there exists a smooth quartic curve $\Gamma$ of $\p^2$ given by the equation $F(x,y,z)=0$ where $F$ is homogeneous of degree $4$ such that the surface $S$ can be viewed in the weighted projected space $\p(2,1,1,1)$. The morphism to $\p^2$ is the anti-canonical morphism, thus it is defined over $\Q $, so the quartic curve is also defined over $\Q $. Since every automorphism of $S$ must leave the quartic $w=F(x,y,z)=0$ invariant, we have a homomorphism $\rho: \Aut(S)\rightarrow \Aut(\p^2,\Gamma)$. By \cite[Proposition 8.3.1]{MR2286582}, the exact sequence 
    \begin{equation}\label{Exact:deg2}
    1\rightarrow\langle\sigma\rangle\rightarrow \Aut_{\C }(S)\rightarrow \Aut_{\C }(\Gamma)\rightarrow 1
\end{equation}
splits, where $\sigma$ denotes the Geiser involution $\sigma(w,x,y,z)=(-w,x,y,z)$. (See \cite[Lemma 8.3.2]{MR2286582}). Furthermore, $\Aut_{\C }(S)\cong~\langle\sigma\rangle\times\Aut_{\C }(\p^2,\Gamma)$. (see \cite[Proposition 8.3.1]{MR2286582}). Since $\Aut_{\Q }(\p^2,\Gamma)\subset \PGL _3(\Q )$, by Lemma \ref{lemm-26}, $\Aut_{\Q }(\p^2,\Gamma)$ is of size $24$ at most. This implies to $\mid \Aut_{\Q }(S)\mid =2\cdot\mid \Aut_{\Q }(\p^2,\Gamma)\mid \le 2\cdot24=48$.
\end{proof}
 
\begin{ex}\label{ex-08}
    Let $S$ be a Del Pezzo surface of degree $2$ defined in the weighted projective space $\p(2,1,1,1)$ by the equation
    \[3 w^{2}=-(x_1^{4}+x_2^{4}+x_3^{4})+5( x_1^{2} x_2^{2}+x_1^{2} x_3^{2}+ x_2^{2} x_3^{2}).\]
   This surface $S$ contains $3$ disjoint $(-1)-$curves, given by
  \[\begin{array}{l}
          E_1=\{w = x_1^{2}+x_1 x_2 +x_2^{2}, x_3 = -x_1 -x_2\},\\
        E_2=\{w = -x_1^{2}-x_1 x_2 -x_2^{2}, x_3 = x_1 +x_2\},\\
        E_3=\{w = x_1^{2}+2 x_1 x_2 -x_2^{2}, x_3 = 2 x_1 -x_2\}.
    \end{array}\]
By contracting these three $(-1)-$curves, there is thus a contraction morphism $\eta: S\rightarrow Y$, where $Y$ is a Del Pezzo surface of degree $5$, then $Y(\Q )\neq\emptyset$, so the surface $S$ is rational. Indeed, this follows from \cite[Theorem 3.15]{MR225780}. Furthermore, the automorphism group $\Aut(S)$ is generated by  
 \[\begin{array}{l}
    \rho_1: [w:x_1:x_2:x_3]\mapsto[a_1w:a_2\cdot x_1:a_3\cdot x_2:x_3], \\
    \rho_2: [w:x_1:x_2:x_3]\mapsto[w:x_{\tau(1)}:x_{\tau(2)}:x_{\tau(3)}],
 \end{array}\]
where $a_i\in\{\pm1\}$ for $i=1,..,3$ and $\tau\in\sym_3$, and then $\Aut(S)\cong(\Z /2\Z )^3\rtimes\sym_3$, and this implies to $\mid \Aut(S)\mid =8\cdot6=48$, this implies to $\mid \Aut(S)\mid =8\cdot6=48$.
\end{ex}
  \section{\texorpdfstring{Del Pezzo surfaces of degree $1$}{Del Pezzo surfaces of degree 1}}
A Del Pezzo surface $S$ of degree $1$ defined over $\C $ is given by the blow-up of $8$ points of $\p^2$, in general position. The linear system $\mid -2K_S\mid $ induces a degree $2$ morphism onto a quadric cone in $Q\subset\p^2$, ramified over the vertex $v$ of $Q$ and a smooth curve $C$ of genus $4$. Moreover, $C$ is the intersection of $Q$ with a cubic surface. (See \cite{MR1802909, MR2066103, MR2641179}).

\begin{lem}\label{lemm-23}
    Let $S$ be a Del Pezzo surface of degree $1$ defined over $\Q $. If $G\subseteq\Aut(S)$ is finite subgroup, then $\mid G\mid \le 12$. 
\end{lem}
 
\begin{proof}
    Let $S$ be a Del Pezzo surface of degree $1$ defined over $\Q $. By \cite[Theorem III.3.5]{MR1440180} and \cite[Corollary 3.54]{MR2062787}, up to a change of coordinates, we may assume that the surface $S$ has the equation
    \begin{equation}\label{Eq_3}
        w^2 - z^3 - F_4(x, y)z - F_6(x, y)=0,
    \end{equation}
in the weighted projective space  $\p(3,1,1,2)$, where $F_4$ and $F_6$ are forms of respective degree $4$ and $6$.  Any element of $\Aut(S_{\Q})$ is of the form
\[[w:x:y:z]\mapsto[\lambda\cdot w+p_3+z\cdot p_1:ax+by:cx+dy:\mu\cdot z+p_2],\]
where $p_i$ is a homogeneous polynomial of degree $i$ for $i=1,..,3$ and the other are constants in $\Q$, with $\lambda\cdot\mu\neq0$. Replacing in the equation we obtain
\[(\lambda\cdot w+p_3+z\cdot p_1)^2-(\mu\cdot z+p_2)^3-(\mu\cdot z+p_2)g_4-g_6,\]
where $g_i=f_i(ax+by,cx+dy)$. This should be a multiple of the Equation \ref{Eq_3}. The degree $1$ coefficient in $w$ is $2\cdot \lambda\cdot w\cdot (p_3+z\cdot p_1)$ and needs to be zero, so $p_3=0$ and $p_1=0$. Then, the degree $2$ coefficient of $z$ is $-3\cdot\mu^2\cdot p_2$ so needs to be zero. This implies that every automorphism is of the form 
\[[w:x:y:z]\mapsto[\lambda\cdot w:ax+by:cx+dy:\mu\cdot z].\]
Replacing in the Equation \ref{Eq_3}, we obtain
\[\lambda^2\cdot w^2-\mu^3z^3-\mu\cdot z \cdot g_4-g_6 =0,\]
where $g_i=f_i(ax+by,cx+dy)$. We find that $\lambda^2=\mu^3$, so by Lemma \ref{lemm-18}, there exists $u\in\Q^*$ such that $\lambda=u^3$ and $ \mu=u^2$. As $[w:x:y:z]=[u^3w:ux:uy:u^2z]$, we can replace $\lambda$ with $\lambda/u^3=1$ and $\mu$ with $\mu/u^2=1$. Every element is then of the form
\[[w:x:y:z]\mapsto[w:ax+by:cx+dy:z],\]
where $\left(\begin{array}{cc}
     a& b \\
    c & d
\end{array}\right)\in\GL_2(\Q)$. Moreover the matrix is unique, as if we multiply everything with $[s^3:s:s:s^2]$, then to obtain $s^2=s^3=1$ we need $s=1$. The group of automorphism is then a finite subgroup of $\GL_2(\Q)$. In $\GL_2(\Q)$, every finite subgroup divides $12$ (see \cite[Chap. IX, section 14]{MR340283}), then $\mid \Aut(S_{\Q})\mid \le 12$. Replacing again in the Equation \ref{Eq_3}, we see that $f_i(ax+by,cx+dy)=f_i$ for each $i$. The group of automorphism is then just 
\[\{ A\in \GL_2(\Q) \mid  f_i\circ A=f_i \text{ for each }i\}.\]
\end{proof}
 
\begin{ex}\label{ex-10}
Let $S$ be a Del Pezzo surface of degree $1$ defined over $\Q $, given by the equation
\[ w^2 - z^3 - F_4(x, y)\cdot z - F_6(x,y)=0,\]
where $F_4=\mu\cdot (x^2+xy+y^2)^2$,
\[F_6= \lambda\cdot(x^2y^4+2x^3y^3+x^4y^2)+(3x^5y+3xy^5-5x^3y^3+x^6+y^6),\]
and $\mu,\lambda\in\Q^*$. Then the automorphism group $\Aut_{\Q}(S)$ of $S$ is generated by the two elements 
\begin{eqnarray*}
    \alpha:&[w:x:y:z]\mapsto[w:-y:x+y:z],\\
   \beta:&[w:x:y:z]\mapsto[w:-x-y:y:z],
\end{eqnarray*}
where $\alpha^6=\beta^2=id$ and $\beta\alpha\beta=\alpha^{-1}$. Hence, the group $\Aut(S_{\Q})\cong D_6$ and it has $12$ elements. Moreover, If $\left\{\lambda = -{\frac{1}{5}}, \mu = -{\frac{6}{5}} \right\}$, this surface $S$ contains $4$ disjoint $(-1)-$curves, given by
  \[ \begin{array}{l}
          E_1=\{ w = x^{2} y +x \,y^{2}, z = -x^{2}-x y -y^{2}\},\\
          E_2=\{w = -x^{2} y -x \,y^{2}, z = -x^{2}-x y -y^{2}\},\\
        E_3=\{w = xy \left(x +y \right) , z = -x^{2}-x y -y^{2}\},\\
        E_4=\{{w = -xy(x + y), z = -x^2 - xy - y^2}\}.
    \end{array}\]
By contracting these four $(-1)-$curves, there is thus a contraction morphism $\eta: S\rightarrow Y$, where $Y$ is a Del Pezzo surface of degree $5$, then $Y(\Q )\neq\emptyset$, so the surface $S$ is rational. Indeed, this follows from \cite[Theorem 3.15]{MR225780}. 
\end{ex}
\bibliographystyle{alphadin}
\bibliography{reference.bib}
\end{document}